\newcommand{\Ac}{\mathcal A}
\newcommand{\Bc}{\mathcal B}
\newcommand{\Vc}{\mathcal V}
\newcommand{\Mc}{\mathcal M}
\newcommand{\Sc}{\mathcal S}
\newcommand{\Zc}{\mathcal Z}
\newcommand{\Nc}{\mathcal N}
\newcommand{\Pc}{\mathcal P}
\newcommand{\Lc}{\mathcal L}
\newcommand{\Jc}{\mathcal J}
\newcommand{\Tc}{\mathcal T}
\newcommand{\Rb}{\mathbb{R}}
\newcommand{\Cb}{\mathbb{C}}
\newcommand{\Hb}{\mathbb{H}} 
\newcommand{\Nb}{\mathbb{N}}
\newcommand{\Sb}{\mathbb{S}}
\newcommand{\nn}[1]{| #1 |}
\newcommand{\nd}[1]{\left\|#1 \right\|}
\newcommand{\pr}[2]{\mbox{pr}_{#1}(#2)}
\newcommand{\fbs}[1]{\Jc_{#1}}
\newcommand{\fb}[2]{\fbs{#1}(#2)}
\newcommand{\flsz}[1]{\Lc_{#1}}
\newcommand{\flz}[2]{\flsz {#1} ({#2})}
\newcommand{\ssm}{M_v^{ss}}
\newcommand{\supp}{\mbox{\rm supp } }
\newtheorem{thm}{Theorem}
\newtheorem{lem}[thm]{Lemma}
\newtheorem{prop}[thm]{Proposition}
\newtheorem{cor}[thm]{Corollary}
\newenvironment{proof}[1][]
{\begin{trivlist}
  \item[]\hspace{\parindent}{\em
      Proof {#1}:}}
  {\hfill $\square$
  \end{trivlist}}
\title{The Spherical Maximal Function 
on the Free Two-step Nilpotent Lie Group}
\author{V\'eronique Fischer
\footnote{Author partially supported by the European Commission (IHP Network Harp).}}
\date{\today}
\begin{document}

\maketitle


\begin{abstract}
We consider here  the free two step nilpotent Lie group,
provided with the homogeneous Kor\'anyi norm;
we prove the $L^p$-boundedness of the maximal function 
corresponding to the homogeneous unit sphere,
for some $p$.
\end{abstract}

\paragraph{Key words.}
spherical means, maximal functions, nilpotent Lie groups, 
spherical function, Plancherel formula.


\section{Introduction}

This article is organized as follows.
As introduction,
we collect some results and methods of work
on singular maximal functions
corresponding to averages 
over dilations of a fixed surface
on a nilpotent Lie group.
We also present our main result.

Then we define precisely the free two-step nilpotent Lie groups, 
maximal $L^p$-inequality and the square functions used here
(Section~\ref{sec_def_not}).

Next (Section~\ref{sec_proofLp}), we prove 
$L^p$-maximal inequalities
corresponding to Kor\'anyi homogeneous spheres,
assuming $L^2$-boundedness of our square functions.
This hypothesis is proved in Section~\ref{sec_squarefcn},
with the help of the Plancherel formula
and spherical functions;
these notions are recalled in Section~\ref{sec_not_tool}.

\subsection{Some maximal functions
  on nilpotent Lie groups}
\label{subsection_max_fcn_nilp}

\paragraph{On the Euclidean space.} 
Stein proved
$L^p$-maximal inequalities 
for Euclidean spheres; 
that is the supremum of the averages
over the spheres centered at a point 
defines an operator on the space of smooth functions, 
called ``the maximal function 
corresponding to Euclidean spheres'' 
(or more simply ``the spherical maximal function'');
and this operator extends to a bounded operator on $L^p(\Rb^n)$.
Here $p>n/(n-1)$, $n\geq 3$ (sharp result).
Bourgain showed the case $n=2$ \cite{bourgain};
the case $n=1$ is vacuously true.

For the proof  in \cite{stein_Maxfunc,stein_wainger},
Stein uses:
\begin{itemize}
\item[(i)] 
 complex interpolation for the analytic continuation 
  of the family of convolution operator with kernel
  given by
$x\mapsto
2{(1-\nn{x}^2)}_+^{\alpha-1}/\Gamma(\alpha)$,
\item[(ii)] 
 the well known $L^p$-boundedness 
  of the standard maximal function
  (corresponding to the Euclidean balls)
  \cite{coifman_weiss},
\item[(iii)] 
  the $L^2$-boundedness of some square functions
(with the help of the Euclidean Fourier transform
  and estimates for special functions).
\end{itemize}

But Stein gave another proof of the same result
in Corollary 3 of \cite[XI\S 3] {stein_Han},
where he also presented the case of variable manifolds.
The method is based there on 
oscilatory integrals operators and on curvature:
in fact, the curvature of the unit Euclidean sphere~$\Sb^n$
 is closely related to the Fourier transform of
the probability measure of~$\Sb^n$
\cite[XI\S 1] {stein_Han}.

\paragraph{On Heisenberg groups.}
M. Cowling offered \cite{cowling} an alternative approach 
to Stein's first work on $\Rb^n$;
he adapted it on the Heisenberg group $\Hb^n$
for the Kor\'anyi norm:
he obtained the $L^p(\Hb^n)$-boundedness of
 the spherical maximal function, $p>(2n+1)/2n$ (sharp result).

Still on the Heisenberg group $\Hb^n=\Cb^n\times \Rb$,
Nevo and Thangavelu considered 
the case of the complex spheres
$\{(z,0)\in\Cb^n\times \Rb : \nn{z}=r\}$,
and showed \cite[\S3.6]{ThangB}
that the corresponding maximal function 
is $L^p$-bounded for $p>(2n-1)/(2n-2)$.
He adapted previous works of Nevo
on ``ergodic'' maximal  inequalities
on some Lie groups 
(see \cite{margulis_nevo_stein_semisimplegr,Nevo}),
where the proofs are based (for the global part) 
on classical study 
of maximal functions for semi-groups
\cite[III\S 3]{stein_topics_in_an},
and on $L^2$-estimates of square functions 
with spectral calculus (spherical functions).\\
The optimal range for $p$ is $p>2n/(2n-1)$;
this was proved recently by Narayanan and Thangavelu \cite{thangavelu2004}.

\paragraph{On nilpotent Lie groups.}
M\"uller and Seeger obtained \cite{muller_seeger}
also this optimal range for $p$ in a study
which concerns more general surfaces 
(with a non vanishing rotational curvature)
in ``non-degenerate''
two-step nilpotent Lie groups. 

In a still unpublished work \cite{schmidt},
the maximal functions on stratified group
associated to hyper-surfaces 
with a non vanishing rotational curvature,
were also studied  
by adapting Stein's second proof of the Euclidean case
\cite[Corollary 3 in XI\S 3] {stein_Han};
it yields to their $L^p$-boundedness
with $p>n/(n-1)$, 
where $n\geq 3$ denotes the topological dimension of the group.
In particular,
it covers the case of the Kor\'anyi spheres
in groups of Heisenberg type,
with a sharp range for $p$, 
and so includes the result of Cowling \cite{cowling} on $\Hb^n$.

\subsection{Our main result} 

Let $N_v$ be the free two step nilpotent Lie group
with $v$ generators, provided as a stratified group, 
with the Kor\'anyi norm.
Let $\Ac$ denote the spherical maximal functions.
We recall the definitions of these objects 
in Section~\ref{sec_def_not}.

The main result of this paper is the $L^p$-boundedness of~$\Ac$:
\begin{thm}
  \label{mainthm}
Assume $v\geq 4$. The spherical maximal function $\Ac$ satisfies an $L^p(N_v)$-inequality
where $2h/(2h-1)<p\leq\infty$
 and   $h$ is the largest integer such that $h<v(v-1)/4-1$.
\end{thm}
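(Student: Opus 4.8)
The plan is to follow the structure announced in the introduction, which splits the argument into an $L^p$-maximal inequality (Section~3) that reduces everything to the $L^2$-boundedness of certain square functions (Section~4). Let me think about how I would organize this.

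The maximal function is a supremum over dilations of convolution with the measure on the Korányi unit sphere. The standard strategy, following Stein's first proof as adapted by Cowling to the Heisenberg setting, is to embed the spherical measure into an analytic family of distributions $\mathcal J_z$ indexed by a complex parameter, where at one end (large $\mathrm{Re}\, z$) the kernels are smooth enough that the maximal operator is dominated by the Hardy-Littlewood-type maximal function (corresponding to Korányi balls), which is bounded on all $L^p$, $p>1$; and at the other end one recovers the spherical measure itself. I would set up this analytic continuation $\mathcal J_z(x) = 2(1-\|x\|^2)_+^{z-1}/\Gamma(z)$ in the homogeneous Korányi norm.

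The key steps: First I would dyadically decompose each $\mathcal J_z$ into pieces $\mathcal L_{j,z}$ localized at scale $2^{-j}$ near the sphere, and form the associated square function $(\sum_j |\mathcal L_{j,z}*f|^2)^{1/2}$; controlling the maximal function is then reduced, via a standard square-function/Sobolev-embedding argument, to $L^2$-bounds on these square functions with the right growth in the imaginary part of $z$. Second I would invoke the $L^2$-boundedness of these square functions, which is exactly the hypothesis proved in Section~4 via the Plancherel formula and spherical function estimates on $N_v$ (this is where the arithmetic constant $h$, the largest integer with $h<v(v-1)/4-1$, enters, reflecting the decay rate of the relevant spherical functions and hence the dimension of the center $v(v-1)/2$). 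Third I would interpolate (Stein's analytic interpolation theorem) between the trivial $L^2$ estimate valid for $\mathrm{Re}\, z$ near the critical line and the easy $L^p$ estimate for $\mathrm{Re}\, z$ large, transferring boundedness to the value $z=1$ that corresponds to the sphere, thereby obtaining the range $2h/(2h-1)<p\leq\infty$.

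The main obstacle I expect is the $L^2$ square-function estimate, which is precisely why the paper isolates it as the hypothesis for Section~4: on the free two-step group the Plancherel measure and the representation theory are substantially more involved than on Heisenberg or $H$-type groups, because the center has dimension $v(v-1)/2$ and the generic coadjoint orbits depend on a skew-symmetric matrix parameter rather than a single real frequency. Establishing the decay of the spherical functions uniformly in these parameters — and extracting the sharp exponent $h$ from that decay — is the technical heart of the matter. The reduction in Section~3 (analytic interpolation plus domination by the Korányi ball maximal function) is comparatively routine once the square-function bound is in hand, so I would present that reduction cleanly and defer the representation-theoretic estimate, treating its $L^2$-boundedness as the stated hypothesis.
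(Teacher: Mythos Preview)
Your overall architecture matches the paper: embed $\mu$ into the analytic family $\mu^\alpha(n)=2(1-|n|^2)_+^{\alpha-1}/\Gamma(\alpha)$, get easy $L^p$-maximal bounds for $\Re\alpha\ge 1$ via domination by the Kor\'anyi ball maximal function, get $L^2$-maximal bounds down to $\Re\alpha>1-h$ via a square-function estimate that ultimately rests on Plancherel and spherical functions, then Stein-interpolate. This is exactly the skeleton of Theorems~\ref{thm_L2_S} and~\ref{thm_Lp_Ac}.

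Where you diverge is in the square function itself. You propose a dyadic Littlewood--Paley decomposition $\mathcal L_{j,z}$ of the kernel near the sphere and a Sobolev-embedding argument to pass from $(\sum_j|\mathcal L_{j,z}*f|^2)^{1/2}$ to the maximal function. The paper does \emph{not} decompose the kernel; instead it uses the continuous derivative-in-dilation square functions
\[
S^j(f)(n)=\Big(\int_0^\infty |\partial_s^j(f*\mu_s)(n)|^2\,s^{2j-1}\,ds\Big)^{1/2},
\]
and the bridge to the analytic family is $h$ integrations by parts in the radial variable, which rewrite $A^\alpha$ (for $\Re\alpha>1-h$) as a linear combination of operators $B^\alpha_{h,j}$ that are controlled pointwise by $S^j$ (Proposition~\ref{prop_analytic_cont} and Lemma~\ref{lem_ineq_Bhj}). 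This is why the integer $h$ appears so cleanly: it is literally the order of differentiation one can afford, and the constraint $h<(z-2)/2=v(v-1)/4-1$ comes from checking finiteness of a Bessel integral $\int_0^\infty |\mathcal J_{(z-2)/2}^{(n)}(s)|^2 s^\beta\,ds$ after the Plancherel reduction. In your dyadic picture the same $h$ would have to emerge as a decay exponent for $\widehat\mu$ in the group Fourier transform, which is of course equivalent information, but you would need to explain that translation; as written, your proposal leaves the origin of $h$ implicit. Both routes are viable, but the paper's derivative-based square function is the one that plugs directly into the spherical-function computation of Section~\ref{sec_squarefcn}, where $\langle\mu_s,\phi\rangle$ has an explicit integral form whose $s$-derivatives can be estimated.
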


This result is new, but probably not sharp for the range of $p$.
The group $N_v$ does not fulfill the non-degeneracy condition 
of \cite{muller_seeger},
and the surface is not of the type studied there neither.
It is not covered by the study~\cite{schmidt}
because the curvature of the unit homogeneous sphere in $N_v$
 vanishes ``at the equator'',
$v\geq 3$.

Our proof follows the first method of the Euclidean case 
\cite{stein_Maxfunc},
i.e. the points (i), (ii), (iii);
but the Euclidean norm is replaced by the Kor\'anyi norm
and for $L^2$-estimates of square functions
(i.e. for (iii)), 
we use the Plancherel formula and the bounded spherical functions.
This method for groups of Heisenberg type
(and always Kor\'anyi homogeneous spheres)
gives a result included in~\cite{schmidt}
(see the French PhD thesis of the  author \cite{moi}).

The rest of this paper is organized as follows.
We start by setting definitions and notations 
for the group, the spherical maximal function
and square functions denoted $S^j$
(Section~\ref{sec_def_not}).
We give then the proof of our main theorem
(Section~\ref{sec_proofLp}),
provided that some square functions $S^j$  
satisfy $L^2$-estimates.
Next (Section~\ref{sec_squarefcn})
we prove this hypothesis,
using the Plancherel formula and spherical functions 
(notions recalled in Section~\ref{sec_not_tool}).


\section{Definitions and Notations}
\label{sec_def_not}

Here we set definitions and notations 
for the group, the spherical maximal function
and the square functions that we study.

In this paper, we assume $v$ is an integer such that $v\geq 2$.

\subsection{Free two-step nilpotent Lie algebras and groups}
\label{subsec_def_not_lie_alg_gr}

Let $\Nc_v$ be the 
(unique up to isomorphism)
{\it free two-step nilpotent Lie algebra} with $v$ generators;
the definition using the universal property 
can be found in \cite[Chapter V \S 4]{jacobson}.
Roughly speaking, $\Nc_v$ is a two-step nilpotent Lie algebra
with $v$ generators $X_1,\ldots,X_v$,
such that the vectors  $X_1,\ldots,X_v$ and
$X_{i,j}=[X_i,X_j]$, $i<j$, form a basis of the vector space $\Nc_v$.

We denote by
$\Vc$ and $\Zc$,
the vector spaces generated by 
the families of vectors $X_1,\ldots,X_v$
and $X_{i,j}:=[X_i,X_j], 1\leq i< j\leq v$
respectively;
these families become the canonical bases of $\Vc$ and $\Zc$.
Thus $\Nc_v=\Vc\oplus\Zc$ is a stratified algebra (see \cite{folland_stein}),
and $\Zc$ is the center of the Lie algebra $\Nc_v$. 
Let $z=v(v-1)/2$ be the dimension of $ \Zc$
and $Q=v+2z=v^2$ be the homogeneous dimension.
We write $v=2v'$ or $2v'+1$.

The connected simply connected nilpotent Lie group
which corresponds to $\Nc_v$ is called the
{\it free two-step nilpotent Lie group} and is denoted $N_v$.
We denote by $\exp :\Nc_v\rightarrow N_v$ the exponential map.
In this paper, we use the notations $X+A\in\Nc$, $\exp(X+A)\in N$
when $X\in \Vc,A\in\Zc$.
For an element $n=\exp(X+A)\in N$,
we define the dilations: 
$r.n=\exp(rX+r^2A)$, $r>0$,
and the (homogeneous) Kor\'anyi norm: 
$\nn{n}={( \nn{X}^4+\nn{A}^2 )}^{\frac14}$ (see \cite{folland_stein}).

\subsection{Some measures}

The canonical bases of $\Vc$ and $\Zc$
induce Lebesgue measures on  $\Vc$ and $\Zc$ respectively,
thus on $\Nc$, and a Haar measure $dn$ on $N_v$.
In this paper, 
the $L^p$ spaces over $N_v$
and the measure $\nn{E}$ for a Borel set $E\subset N_v$
are related to the Haar measure~$dn$.

We denote $\mu$
the unique Radon measure
on the unit homogeneous sphere 
$S_1:=\{n\in N_v\, ,\, \nn{n}=1\}$ 
such that 
for each integrable function $f$ on $N_v$, 
we have
\cite[prop.1.15]{folland_stein}:
\begin{equation}
  \label{formula_polar_coordinate}
  \int_{N} f(n)dn    
  \,=\,
  \int_{r=0}^\infty\int_{S_1} f(r.n)d\mu(n)r^{Q-1}dr 
  \quad.
\end{equation}
By uniqueness of (\ref{formula_polar_coordinate}),
and with suitable changes of variables, 
we easily obtain:
\begin{eqnarray}
  \int_{S_1} f(n)d\mu(n)  
  \,=\,
  2\int_{0}^1 \int_{\Sb^v}\int_{\Sb^z}
  f(\exp t X +\sqrt{(1-t^4)}Z)\nonumber\\
  d\sigma_z(Z)
  d\sigma_v(X)
  t^{v-1}{(1-t^4)}^\frac{z-2}2dt
  \quad .\label{expression_mu}
\end{eqnarray}
(where $\sigma_n$  denotes
the probability measure on the Euclidean sphere
$\Sb^n\subset\Rb^n$).
It is still valid for any two-step stratified group.

\subsection{Maximal $L^p$-inequality}
\label{subsec_maxin}

Here, we recall the definition
of the maximal function associated to an operator over $N_v$,
in particular of the spherical maximal function $\Ac$,
and of the standard maximal function $\Mc$; 
this is still valid for any stratified group.

Let us start with the definitions of 
{\it dilations for subsets, distributions and operators} over $N_v$.
Let $r>0$.
The $r$-dilation of $E\subset N_v$ is $r.E=\{r.n, n\in E\}$,
and if $E$ is a measurable set, 
we have $\nn{r.E}=r^Q\nn{E}$.
The dilation $F_r$ of a distribution $F$ on $N_v$,
is  given for a test function~$f$ on~$N_v$ by
$<F_r,f>=<F,f(r.)>$
where
$f(r.):n\mapsto f(r.n)$.
The $r$-dilation $T_r$ of an operator $T$
 on a dilation invariant space of function on $N_v$ is given 
for a function $f$ by:
$T_r.f=
\left(T\,f(r.)\right)(r^{-1}.)$.
For a convolution operator
with an integrable function or a measure,
the $r$-dilated operator coincides with the convolution operator 
with the corresponding $r$-dilated function or measure.

Let  $T$  be an operator on a dilation invariant space of function on $N_v$.
We define the maximal function associated to $T$:
$\Tc:f \mapsto \sup_{r>0} \nn{T_r.f}$.
We say that 
{\it $\Tc$ satisfies an $L^p$-inequality} 
(or $T$ satisfies an $L^p$ maximal inequality,
or $\Tc$ is $L^p$-bounded)
if there exists a constant
$C=C(T,v,p)>0$ such that for all smooth compactly supported functions 
 $f$ on $N_v$ 
(or equivalently 
the class $L^p(N_v)$, the Schwartz class $\Sc(N_v)$ \ldots),
we have:
$\nd{\Tc.f}_{L^p}
\leq 
C\nd{f}_{L^p}$.

The {\it maximal spherical function} $\Ac$
is the maximal function associated to the convolution operator with 
the measure $\mu$ (see~(\ref{formula_polar_coordinate})).
The aim of this paper is to establish $L^p$-inequalities for $\Ac$.

Let $\Mc$  denote
 the {\it standard maximal function},
i.e. the maximal function associated to the characteristic function 
of the unit homogeneous ball $B_1:=\{n\in N_v;\nn{n}\leq 1\}$.
During the proof of our main result,
we will use the well known 
$L^p$-inequalities ($1<p\leq\infty$ ),
satisfied by the standard maximal function $\Mc$
\cite{coifman_weiss}.
We will also need the following corollary:
\begin{cor}
  \label{cor_decfcn}
Let $m:\Rb^+\rightarrow \Rb^+$ be a decreasing function
and $F:N\rightarrow \Rb^+$ be defined by $F(n)=m(\nn{n})$, $n\in N_v$.
Suppose $F\in L^1$.
The maximal function 
associated to the convolution operator with $F$
satisfies $L^p$-inequalities, 
for $1<p\leq \infty$.
\end{cor}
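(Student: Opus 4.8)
The plan is to dominate this maximal function pointwise by a fixed multiple of the standard maximal function $\Mc$ and then to invoke the $L^p$-boundedness of $\Mc$ for $1<p\leq\infty$ recalled just above. Writing the operator $T$ as convolution with $F$, the associated maximal function is $f\mapsto\sup_{r>0}\nn{f*F_r}$; by homogeneity of the Kor\'anyi norm one computes $F_r(n)=r^{-Q}m(r^{-1}\nn{n})$, so $F_r$ is again of the form covered by the corollary and, after the change of variables $n=r.\tilde n$, has the same mass $\nd{F_r}_{L^1}=\nd{F}_{L^1}$. It therefore suffices to establish a single pointwise inequality
\[
\nn{f*G}\leq C\,\nd{G}_{L^1}\,\Mc(\nn{f}),
\]
valid for every kernel $G(n)=g(\nn{n})$ with $g$ decreasing and $G\in L^1$, with $C$ independent of $G$ and $f$, and then to apply it with $G=F_r$ and take the supremum over $r>0$.

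The key step is the layer-cake decomposition of the radial profile. Since $g$ is decreasing, for each level $s>0$ the superlevel set $\{n\in N_v:g(\nn{n})>s\}$ coincides, up to a boundary of Haar measure zero, with the homogeneous ball $B_{\rho(s)}$ of radius $\rho(s):=\sup\{t\geq0:g(t)>s\}$. Hence $G=\int_0^\infty\mathbf{1}_{B_{\rho(s)}}\,ds$, and for $f\geq0$ Tonelli's theorem (all integrands being nonnegative) gives
\[
f*G(n)=\int_0^\infty\Big(\int_{B_{\rho(s)}}f(nm^{-1})\,dm\Big)\,ds
=\int_0^\infty\nn{B_{\rho(s)}}\Big(\frac{1}{\nn{B_{\rho(s)}}}\int_{B_{\rho(s)}}f(nm^{-1})\,dm\Big)\,ds.
\]
Each inner factor is an average of $f$ over a homogeneous ball and is thus bounded by $\nn{B_1}^{-1}\Mc f(n)$; pulling this bound out of the integral and using $\int_0^\infty\nn{B_{\rho(s)}}\,ds=\int_{N}G=\nd{G}_{L^1}$ yields $f*G(n)\leq\nn{B_1}^{-1}\nd{G}_{L^1}\,\Mc f(n)$, which is the desired pointwise inequality for nonnegative $f$.

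For arbitrary $f$ one applies this to $\nn{f}$ together with $\nn{f*G}\leq\nn{f}*G$ (legitimate since $G\geq0$), and specializes to $G=F_r$. Since the resulting bound $\nn{f*F_r}\leq\nn{B_1}^{-1}\nd{F}_{L^1}\,\Mc(\nn{f})$ is uniform in $r$, taking the supremum over $r>0$ and then the $L^p$-norm gives $\nd{\sup_{r>0}\nn{f*F_r}}_{L^p}\leq\nn{B_1}^{-1}\nd{F}_{L^1}\nd{\Mc(\nn{f})}_{L^p}\leq C\nd{f}_{L^p}$, where the last inequality is the $L^p$-boundedness of $\Mc$ applied to $\nn{f}\in L^p$. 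The argument is classical, and the only points needing care are routine: the measurability of $s\mapsto\rho(s)$ and of the superlevel sets, their identification with balls modulo null sets (so that the spherical boundary is irrelevant under integration), and the interchange of the two integrations, all guaranteed by positivity via Tonelli. The main substantive idea is simply the reduction, through the layer-cake formula, of convolution against a radially decreasing kernel to an average of ball-averages controlled by $\Mc$.
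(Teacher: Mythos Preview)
Your argument is correct and is the standard proof of this fact: pointwise domination of convolution with a radially decreasing $L^1$ kernel by the Hardy--Littlewood maximal function via the layer-cake decomposition, followed by the known $L^p$-boundedness of $\Mc$. The paper itself does not supply a proof of this corollary; it is stated as an immediate consequence of the $L^p$-inequalities for the standard maximal function $\Mc$ cited from \cite{coifman_weiss}, and your derivation is precisely the classical route one has in mind in that situation.
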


\subsection{Square functions}

For $j=1,2,\ldots$,
we define the following operator, called the square function $S^j$, by:
$$
S^j(f)(n) 
\,:=\,
\sqrt{ \int_{s=0}^\infty  \nn{\partial_s^j  (f*\mu_s)(n) }^2
  s^{2j-1} ds }
\quad,\quad f\in \Sc(N_v)
\quad,\quad n\in N_v\quad.
$$
We will have to show that for some $j$,
the square functions $S^j$ satisfy $L^2$-estimates:
$$
\mbox{i.e}\quad \exists C=C(v,j)>0 \quad
\forall f\in \Sc(N_v)
\qquad
\nd{S^j.f}_{L^2}
\,\leq\, C
\nd{f}_{L^2}
\; .
$$
In Section~\ref{sec_squarefcn},
using the Plancherel formula and spherical functions,
we will obtain:
\begin{thm}
  \label{thm_L2_S}
We assume $v\geq 4$.
The square functions $S^j$, $j=1,\ldots,h$,
   satisfy $L^2$-estimates, 
where $h\in \Nb$, $1\leq h < (z-2)/2$.
\end{thm}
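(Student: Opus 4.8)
The plan is to pass to the group Fourier transform via the Plancherel formula and to exploit the homogeneity built into the definition of $S^j$. First I would use Fubini to write
$$
\nd{S^j f}_{L^2}^2
= \int_0^\infty \int_{N_v} \nn{f*(\partial_s^j\mu_s)(n)}^2 \, dn \; s^{2j-1}\,ds ,
$$
which is legitimate for $f\in\Sc(N_v)$, and then apply the Plancherel formula for $N_v$ (recalled in Section~\ref{sec_not_tool}). Since convolution becomes composition of operators, the inner integral becomes an integral over the generic representations $\pi_\lambda$, $\lambda\in\Zc^*$, of $\nd{\widehat f(\pi_\lambda)\,\partial_s^j\widehat{\mu_s}(\pi_\lambda)}_{HS}^2$ against the Plancherel measure, where hats denote the operator-valued Fourier transform.

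The decisive point is that $\mu$ is invariant under the relevant compact symmetries, so $\widehat\mu(\pi_\lambda)$ is diagonalized by the Hermite-type basis of the representation space, its eigenvalues being exactly the bounded spherical functions. Writing $\mu_s=(s.)_*\mu$ and using that $\pi_\lambda(s.n)$ is unitarily equivalent to $\pi_{s^2\lambda}(n)$, I would show that each eigenvalue of $\widehat{\mu_s}(\pi_\lambda)$ has the form $\Phi(s\,\xi_k)$, where $\Phi$ is the corresponding spherical function of one real variable and $\xi_k\ge 0$ depends on $\lambda$ and on the index $k$ of the eigenspace; this is forced by the homogeneity of degree $1$ of the Kor\'anyi norm under $n\mapsto s.n$. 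Hence $\partial_s^j\,\Phi(s\xi_k)=\xi_k^{\,j}\,\Phi^{(j)}(s\xi_k)$, and the substitution $u=s\xi_k$ gives
$$
\int_0^\infty \nn{\partial_s^j\Phi(s\xi_k)}^2 s^{2j-1}\,ds
= \int_0^\infty \nn{\Phi^{(j)}(u)}^2 u^{2j-1}\,du =: C_j ,
$$
a constant independent of $k$ and of $\lambda$. This is precisely why the $j$-th derivative is paired with the weight $s^{2j-1}$: it makes $S^j$ scale invariant. Expanding $\widehat f(\pi_\lambda)$ in the diagonalizing basis, the uniform constant $C_j$ factors out of the Hilbert--Schmidt sum, so that $\int_0^\infty \nd{\widehat f(\pi_\lambda)\,\partial_s^j\widehat{\mu_s}(\pi_\lambda)}_{HS}^2 s^{2j-1}\,ds = C_j\,\nd{\widehat f(\pi_\lambda)}_{HS}^2$; applying Plancherel once more yields $\nd{S^j f}_{L^2}^2 \le C_j\,\nd{f}_{L^2}^2$. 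The theorem thus reduces to the finiteness of $C_j$ for $1\le j\le h$.

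It remains to prove $C_j<\infty$, and this is where the hypothesis $j<(z-2)/2$ and the main difficulty enter. Near $u=0$ the spherical function $\Phi$ is smooth and the weight $u^{2j-1}$ (with $j\ge1$) makes the integral harmlessly convergent, so the whole issue is the decay of $\Phi^{(j)}$ as $u\to\infty$. Using the explicit description~(\ref{expression_mu}) of $\mu$ --- in particular the integration over $\Sb^z$ against the weight ${(1-t^4)}^{\frac{z-2}2}$ --- I would realize $\Phi(u)$ as an oscillatory integral and extract its large-$u$ asymptotics by stationary phase; the order of the Bessel factor produced by the $\Sb^z$-integration is governed by $(z-2)/2$, and the flatness of $S_1$ at the equator (where the curvature of the unit Kor\'anyi sphere vanishes for $v\ge3$) degrades the oscillatory decay just enough to produce the threshold $j<(z-2)/2$. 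The hard part is therefore this sharp decay estimate: one must show that each differentiation does not worsen the decay rate and that the resulting exponent makes $\int_1^\infty \nn{\Phi^{(j)}(u)}^2 u^{2j-1}\,du$ converge exactly when $2j<z-2$. Once these asymptotics are established, the uniform bound $C_j$ integrates against the Plancherel measure and completes the proof.
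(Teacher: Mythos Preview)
Your opening reduction is exactly right and matches the paper: Fubini plus Plancherel, together with the fact that $\mu_s$ is radial so that $\Pi_{r,\Lambda}(\mu_s)$ is diagonal on the Hermite basis $\zeta_l$ with eigenvalues $\langle\mu_s,\phi^{r,\Lambda,l}\rangle$, gives
\[
\nd{S^j f}_{L^2}^2
=\int \sum_{l}\Bigl(\int_0^\infty\nn{\partial_s^j\langle\mu_s,\phi^{r,\Lambda,l}\rangle}^2 s^{2j-1}ds\Bigr)\,\nn{k.\Pi_{r,\Lambda}(f)\zeta_l}^2\,dm ,
\]
so the theorem reduces to a \emph{uniform} bound $\sup_{(r,\Lambda,l)\in\Pc}\hat S^j(\phi^{r,\Lambda,l})<\infty$. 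This is Proposition~\ref{prop_S_hat_S} in the paper.

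The gap is in your next step. You assert that each eigenvalue has the form $\Phi(s\xi_k)$ for a \emph{single} function $\Phi$ of one real variable, and then kill the parameter by the substitution $u=s\xi_k$. On $N_v$ this is false. The explicit formula (derived from~(\ref{expression_mu})) is
\[
\langle\mu_s,\phi^{r,\Lambda,l}\rangle
=2\int_0^1\!\!\int_{\Sb^v} e^{itsr x_v}\prod_{j=1}^{v'}\flz{l_j}{\tfrac{\lambda_j}{2}s^2 t^2\nn{\pr{j}{X}}^2}\,d\sigma_v(X)\;\fb{\frac{z-2}{2}}{s^2\sqrt{1-t^4}\,\nn{\Lambda}}\,t^{v-1}(1-t^4)^{\frac{z-2}{2}}dt .
\]
The $s$–dependence enters through $sr$, through each $s^2\lambda_j$, and through $s^2\nn{\Lambda}$ simultaneously, and the discrete multi-index $l\in\Nb^{v'}$ does not scale at all. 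There is no scalar $\xi_k$ such that this is a fixed function of $s\xi_k$; consequently your constant $C_j$ does not exist, and the difficulty is precisely the \emph{uniformity} in $(r,\Lambda,l)$ that your substitution was meant to bypass.

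The paper handles this uniformity head-on. It expands $\partial_s^h\langle\mu_s,\phi\rangle$ by Leibniz and Fa\`a di Bruno into a finite linear combination of explicit pieces $b^{g,j}(s)$ (Lemma~\ref{lem_maj_partial_h_mu_s_phi}), and then bounds each $\int_0^\infty\nn{b^{g,j}(s)}^2 s^{2h-1}ds$ uniformly in $(r,\Lambda,l)$. Two ingredients replace your one-line substitution: a uniform bound on the Laguerre factors and their derivatives that is \emph{independent of $l$} (Lemma~\ref{lem_fcn_lag}), and, when $v$ is odd, repeated integration by parts in the $x_v$–variable on $\Sb^v$ to trade the oscillation $e^{itsr x_v}$ for negative powers of $s$ (Lemmas~\ref{lem_maj_check_b_h} and~\ref{lem_maj_checkb_h0}). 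After these reductions the only $s$–integral left is $\int_0^\infty\nn{\fbs{(z-2)/2}^{(n)}(s)}^2 s^\beta ds$, whose finiteness (equation~(\ref{eq_maj_int_fb2})) for the relevant $\beta$ is exactly the condition $h<(z-2)/2$. So the threshold does come from the Bessel factor produced by $\Sb^z$, as you guessed, but it is reached by explicit special-function estimates rather than by a stationary-phase argument, and the uniformity in the Laguerre index $l$ is a genuine separate issue that your outline does not address.
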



\section{Proof of $L^p$-maximal estimates for $\Ac$ }
\label{sec_proofLp}

Here we will prove that the $L^2$-boundedness of some square functions
$S^j$ implies the $L^p$-boundedness of $\Ac$, for some $p$.
We summarize this fact in the following Theorem:
\begin{thm}
\label{thm_Lp_Ac}
If the square functions $S^j$ satisfy $L^2$-estimates,
then the spherical maximal function $\Ac$ satisfies 
$L^p$-estimates, where:
\begin{itemize}
\item[a)] $j=1$ and $2\leq p\leq\infty$ if $v\geq 2$, 
\item[b)] $1<j\leq h <(Q-2)/2$
and $2h/(2h-1)<p\leq \infty$  if $v>2$.
\end{itemize}
\end{thm}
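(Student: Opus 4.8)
The plan is to dominate $\Ac$ pointwise by the standard maximal function $\Mc$ (ingredient (ii)) and by the square functions $S^j$ (ingredient (iii)), and then to take $L^p$-norms. Fix $n\in N_v$ and $f\in\Sc(N_v)$, and set $g(s):=(f*\mu_s)(n)$, so that $\Ac f(n)=\sup_{s>0}|g(s)|$, with $g(s)\to f(n)$ as $s\to0$ and $g(s)\to0$ as $s\to\infty$. Starting from $\frac{d}{dr}(r^Qg(r))=Qr^{Q-1}g(r)+r^Qg'(r)$ and integrating, I obtain
\[
g(r)=\frac{Q}{r^Q}\int_0^r\tau^{Q-1}g(\tau)\,d\tau+\frac{1}{r^Q}\int_0^r\tau^Qg'(\tau)\,d\tau.
\]
By the polar-coordinate formula~(\ref{formula_polar_coordinate}) the first term equals $Q\nn{B_1}$ times the ball average $\frac{1}{\nn{B_r}}\int_{B_r}f(nm^{-1})\,dm$, hence is bounded by $C\,\Mc f(n)$ uniformly in $r$; this is where ingredient (ii) enters, and it is the common starting point for both cases.

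For part a) I estimate the remainder for $j=1$ directly by Cauchy--Schwarz adapted to the weight of $S^1$:
\[
\left|\frac{1}{r^Q}\int_0^r\tau^Qg'(\tau)\,d\tau\right|\le\frac{1}{r^Q}\left(\int_0^r\tau^{2Q-1}\,d\tau\right)^{1/2}\left(\int_0^r\tau|g'(\tau)|^2\,d\tau\right)^{1/2}\le C\,S^1f(n),
\]
uniformly in $r$. Hence $\Ac f(n)\le C\bigl(\Mc f(n)+S^1f(n)\bigr)$. Taking $L^2$-norms and using both the $L^2$-boundedness of $\Mc$ and the hypothesis $\nd{S^1f}_{L^2}\le C\nd{f}_{L^2}$ shows $\Ac$ is bounded on $L^2$; since trivially $\Ac f\le\nd{f}_{L^\infty}$, real interpolation covers $2\le p\le\infty$.

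For part b) I iterate the identity, integrating by parts $j-1$ further times so as to raise the derivative on $g$ up to $\partial_s^jg$ and thereby replace $S^1f$ by $S^jf$. Each step produces boundary and lower-order terms: averages of $g$ (controlled by $\Mc f(n)$) together with maximal expressions in the lower derivatives $g',\dots,g^{(j-1)}$ (controlled by the intermediate square functions $S^1,\dots,S^{j-1}$, all available under the hypothesis). The restriction $j<(Q-2)/2$ is precisely what makes the endpoint boundary terms vanish at $s=0,\infty$ and keeps the weighted integrals convergent. The outcome is a pointwise inequality of the form
\[
\Ac f(n)\le C\,\Mc f(n)+C\,\bigl(\Mc f(n)\bigr)^{1-1/j}\bigl(S^jf(n)\bigr)^{1/j},
\]
which may be read as a weighted Sobolev embedding in the single variable $s$: after the substitution $s=e^t$, $S^jf$ controls $j$ derivatives in $t$ while $\Mc f$ controls the coarse-scale averages at a \emph{fixed} resolution, and it is this fixed averaging scale (rather than a dilation-covariant $L^2$ factor) that permits the exponent $1/j$; note that for $j=1$ the product degenerates and one recovers the additive bound of part a). Taking $L^p$-norms and applying H\"older to the product term with $(S^jf)^{p/j}$ placed in $L^{2j/p}$ (so that $S^jf$ lands in $L^2$, its only available estimate), the factor $\Mc f$ lands in $L^{2(j-1)p/(2j-p)}$; this last exponent exceeds $1$ exactly when $p>2j/(2j-1)$, which is also the condition for the $L^p$-boundedness of $\Mc$ to apply. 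Using the largest admissible index $j=h$ and interpolating with the trivial $L^\infty$-bound yields $2h/(2h-1)<p\le\infty$.

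The main obstacle is establishing the pointwise inequality of part b) with the exponent $1/j$, rather than the $1/(2j)$ that a plain Gagliardo--Nirenberg estimate with an $L^2$ low-frequency factor would give: the low factor must be the genuine maximal average $\Mc f$, which is $L^p$-bounded below $p=2$, and not an $L^2$ square function. Verifying that every boundary and lower-order term in the iterated integration by parts is absorbed into $\Mc f$ and the intermediate $S^i$, and that the weighted integrals converge throughout the range $j<(Q-2)/2$, is the delicate point. Once this inequality is in hand, the passage to $L^p$ is the routine H\"older/interpolation computation above, drawing only on ingredient (ii) for the ball-average term and on the hypothesised $L^2$-estimate (iii) for the $S^jf$-term.
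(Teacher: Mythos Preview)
Your part a) is correct and matches the paper's proof almost verbatim: the identity $g(r)=\frac{Q}{r^Q}\int_0^r\tau^{Q-1}g(\tau)\,d\tau+\frac{1}{r^Q}\int_0^r\tau^Qg'(\tau)\,d\tau$, the identification of the first term with a ball average, Cauchy--Schwarz on the second to bring in $S^1$, and then $L^2$ plus $L^\infty$ interpolation.

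Your part b), however, departs from the paper and has a genuine gap. The paper does \emph{not} iterate the pointwise integration by parts to reach a product estimate of the type $(\Mc f)^{1-1/j}(S^jf)^{1/j}$. Instead it implements ingredient (i) from the introduction, which you have omitted entirely: it defines the analytic family $A^\alpha=\,\cdot\,*\mu^\alpha$ with $\mu^\alpha(n)=2(1-|n|^2)_+^{\alpha-1}/\Gamma(\alpha)$, shows that $A^0$ is the spherical mean, obtains $L^p$-maximal bounds for $\Re\alpha\ge1$ (via Corollary~\ref{cor_decfcn}) and $L^2$-maximal bounds for $\Re\alpha>1-h$ (by writing $A^\alpha$ as a combination of operators $B^\alpha_{h,j}$ and controlling $\Bc^\alpha_{h,j}$ by $S^j$), and then applies Stein's analytic interpolation. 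The square functions $S^j$ enter only through the $L^2$ endpoint of this interpolation; no pointwise product inequality involving $\Mc$ and $S^j$ is ever claimed.

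The gap in your argument is precisely the step you flag as ``the main obstacle'': the pointwise inequality $\Ac f\le C\Mc f+C(\Mc f)^{1-1/j}(S^jf)^{1/j}$ is asserted but not proved. Iterating integration by parts from $g$ to $g^{(j)}$ does not produce a single clean product with exponent $1/j$; each step leaves a remainder controlled by an intermediate $S^i$, and you give no mechanism for collapsing $S^1,\dots,S^{j-1}$ into $(\Mc f)^{1-1/j}(S^jf)^{1/j}$. Your heuristic about a ``fixed averaging scale'' lifting the Gagliardo--Nirenberg exponent from $1/(2j)$ to $1/j$ is not an argument, and I do not see how to make it one: $\Mc f(n)$ is a \emph{supremum} of averages, not an $L^p$-norm of $g$ on the half-line, so standard interpolation inequalities in $s$ do not apply to it. Without this inequality the H\"older computation that follows, however correct formally, has no input. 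The paper's route through the analytic family $A^\alpha$ is exactly what circumvents this difficulty.
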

This proof extends naturally to any stratified Lie groups.
Furthermore Theorems~\ref{thm_L2_S} and~\ref{thm_Lp_Ac}
imply Theorem~\ref{mainthm}.
In this section, we denote $N_v=N$.

\subsection{Proof of the $L^p$-case, $p\geq 2$}
\label{subsec_proofL2}

Here we prove Theorem~\ref{thm_Lp_Ac}.a):
we assume that the square function $S^1$ is $L^2$-bounded
and we prove the $L^p$-boundedness of $\Ac$, $p\geq 2$.

We follow the Euclidean process \cite[XI\S 1]{stein_Han}.
For $f\in \Sc(N)$, we have:  
\begin{eqnarray}
  \mu_t*f(n)
  &=&
  \frac1{t^Q}\int_{0}^t\partial_s(s^Q\mu_s*f(n)) ds
  \nonumber\\  
  &=&
  \frac1{t^Q}\int_{0}^tQs^{Q-1}\mu_s*f(n)ds
  +\frac1{t^Q}\int_{0}^ts^Q\partial_s(\mu_s*f(n)) ds
  \quad .\label{rhs_mu_t*f}
\end{eqnarray}
We compute
$t^Q={\nn{B(n,t)}}/\nn{B_1}$,
where $B(n,r)=\{n'\in N\,,\, \nn{n{n'}^{-1}}\leq r\}$ 
is the homogeneous ball with radius~$r$, centered at~$n$,
and with change in polar coordinates~(\ref{formula_polar_coordinate}):
$$
\frac1{t^Q}\int_{0}^tQs^{Q-1}\mu_s*f(n)ds
\,=\,
Q\frac{\nn{B_1}}{\nn{B(n,t)}}
\int_{B(n,t)}
f(n'') dn''
\quad .
$$
Because of H\"older's inequality, 
the last expression in (\ref{rhs_mu_t*f}),
is majorized by
${(2Q)}^{-1/2} S^1.f(n)$.
Finally, we gather:
$$
\nn{\mu_t*.f(n)} 
\,\leq\, 
Q\frac{\nn{B_1}}{\nn{B(n,t)}}
\int_{B(n,t)}
f(n'') dn''
+\frac1{\sqrt{2Q}}S^1.f(n)
\quad.
$$
Taking the supremum over $t$,
we have the following pointwise estimate:
\begin{equation}
  \label{eq_Ac_Mc_s}
\forall f\in\Sc(N)
\qquad
\Ac.f 
\,\leq\, 
Q\nn{B_1}
\Mc.f +\frac1{\sqrt{2Q}}S^1.f 
\quad .
\end{equation}
The standard maximal function $\Mc$ satisfies
$L^p$-inequalities ,
in particular for $p=2$ 
\cite{coifman_weiss}.
As the square function $S^1$ is assumed to be $L^2$-controlled,
the inequality~(\ref{eq_Ac_Mc_s}) leads to an $L^2$-inequality for~$\Ac$.
But $\Ac$ satisfies also an $L^\infty$-inequality
(because the measure $\mu$ is finite).
By classical real interpolation,
we obtain the $L^p$-boundedness of~$\Ac$,
for $p\geq 2$.
Theorem~\ref{thm_Lp_Ac}.a) is thus proved.

\subsection{Overview of the proof of the $L^p$-cases, $p<2$}
\label{subsec_proofLp}

Here we prove  Theorem~\ref{thm_Lp_Ac}.b).
We start with a choice of an analytic family of operators~$A^\alpha$,
the same as in the Euclidean case
\cite{stein_Maxfunc,stein_wainger}, 
see (i) of Subsection~\ref{subsection_max_fcn_nilp}.
$A^0$ coincides with the convolution operator with kernel $\mu$.
Provided that the square functions $S^j$, $1\leq j\leq h$,
satisfy $L^2$-estimates,
we prove that $A^\alpha$ satisfies $L^p$-maximal inequalities,
for $\Re\alpha\geq 1$ with $1<p\leq\infty$ 
(Subsection~\ref{subsec_Lp_max_ineq_xgeq1}),
and for $\Re\alpha>-h+1$ with $p=2$
(Subsection~\ref{subsec_Lp_max_ineq_x<0}).
To obtain these maximal inequalities, we will use 
Corollary~\ref{cor_decfcn}, and the following estimate
``uniformly in $y\in \Rb$, locally in $x>0$''
for the $\Gamma$ function  \cite[page 151]{tit}:
\begin{eqnarray}
&\forall  [a,b]\subset [0,\infty[
    \quad
    \exists C>0
    \quad
\forall x\in [a,b]
\quad 
    \forall y\in \Rb \nonumber&\\
&C^{-1} \leq 
  \frac{e^{-\frac{\pi}{2}y}{\nn{y}}^{x-\frac{1}{2}}}
{\nn{\Gamma(x+iy)}}
\leq C  \; .&  
  \label{maj_tit}
\end{eqnarray}
From these maximal inequalities, 
with interpolation 
(Subsection~\ref{subsec_interpolation}), 
we will deduce  Theorem~\ref{thm_Lp_Ac}.b).

\subsection{Analytic family of operators}

Let define (as in the Euclidean case,
see \cite{stein_Maxfunc} or Subsection~\ref{subsection_max_fcn_nilp})
for $\alpha\in \Cb\backslash(-\Nb)$:
$$
m^\alpha(r)
=
\frac{2{(1-\nn{r}^2)}_+^{\alpha-1}}{\Gamma(\alpha)}
\quad,\quad r\geq 0\quad,
$$
and
$$
\mu^\alpha(n)
=
m^\alpha(\nn{n})
\quad,\quad n\in N\quad.
$$
For $\Re\alpha>0$, 
the function $\mu^\alpha$ is radial and integrable;
 we denote by $A^\alpha$ the convolution operator 
with kernel $\mu^\alpha$;
it is a $L^p$-bounded operator for $1<p\leq \infty $.

Let $j,h\in\Nb$ 
such that $0\leq j\leq h$
and $1\leq h >Q/2$;
we define also
the operator $B^\alpha_{h,j}$ for $f\in\Sc(N)$,
and $n\in N$, by:
$$
 B^\alpha_{h,j}(f)(n) = \int_{r=0}^1
  m^{\alpha+h}(r) r^{Q-1-2h+j} \partial_r^j (f*\mu_r)(n) dr
  \quad .
  $$ 
\begin{prop}
\label{prop_analytic_cont}
The family  of operators $\{ A^\alpha, \Re \alpha >0\}$
is analytic, and admits an analytic continuation
over $\{\Re \alpha >1-(Q-2)/2\}$
as operators on $\Sc(N)$.
The operator $A^0$ coincides with the convolution operator 
with kernel $\mu$.
For $\Re \alpha >1-h$ with $h< (Q-2)/2$,
$A^\alpha$ can be written as linear combination
of the operators $B^\alpha_{h,j}$, $0\leq j\leq h$.
\end{prop}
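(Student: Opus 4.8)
The plan is to realise $A^\alpha$ as a superposition of the dilated spherical means $f*\mu_r$ and then to transfer $r$-derivatives onto the smooth radial profile by repeated integration by parts, following the Euclidean scheme of~\cite{stein_Maxfunc} with the Kor\'anyi norm in place of the Euclidean one. First I pass to polar coordinates: since $\mu^\alpha$ is radial, (\ref{formula_polar_coordinate}) gives, for $\Re\alpha>0$ and $f\in\Sc(N)$,
$$A^\alpha f(n)=\int_0^1 m^\alpha(r)\,(f*\mu_r)(n)\,r^{Q-1}\,dr=\frac{2}{\Gamma(\alpha)}\int_0^1(1-r^2)^{\alpha-1}\psi(r)\,dr,\qquad\psi(r):=r^{Q-1}(f*\mu_r)(n).$$
Holomorphy of $\alpha\mapsto A^\alpha f(n)$ on $\{\Re\alpha>0\}$ follows by differentiating under the integral sign, the factor $(1-r^2)^{\alpha-1}/\Gamma(\alpha)$ being holomorphic and locally uniformly integrable there.

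The central input is the regularity of $\phi(r):=(f*\mu_r)(n)$. Writing a point of $S_1$ as $\omega=\exp(X_\omega+A_\omega)$ with $X_\omega\in\Vc$, $A_\omega\in\Zc$, one has $(r.\omega)^{-1}=\exp(-rX_\omega-r^2A_\omega)$, so that $\phi(r)=\int_{S_1}f\bigl(n\exp(-rX_\omega-r^2A_\omega)\bigr)\,d\mu(\omega)$. As $f\in\Sc(N)$ and $S_1$ is compact, this extends to a $C^\infty$ function of $r\in\Rb$ whose $r$-derivatives are bounded near $0$. Moreover the map $X_\omega\mapsto-X_\omega$ leaves $\mu$ invariant (it is $X\mapsto-X$ in~(\ref{expression_mu}), under which $\sigma_v$ is symmetric), whence $\phi(-r)=\phi(r)$: the profile $\phi$ is even, and $\partial_r^j\phi(0)=0$ for odd $j$. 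This fixes the orders of vanishing at $r=0$ used below.

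I then integrate by parts $h$ times by means of $(1-r^2)^{\beta-1}=-\tfrac1{2\beta r}\partial_r(1-r^2)^{\beta}$. At the $k$-th step the boundary term at $r=1$ vanishes because $\Re(\alpha+k-1)>0$, while that at $r=0$ is $O(r^{Q-2k})$ times a bounded factor; combined with the even vanishing of $\phi$, all these boundary terms vanish for $1\le k\le h$ under the hypothesis $h<(Q-2)/2$. Each step raises $\beta$ by one and applies the operator $D\psi:=\partial_r(\psi/r)$, so after $h$ steps $A^\alpha f(n)=2^{-h}\int_0^1 m^{\alpha+h}(r)\,D^h\psi(r)\,dr$. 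A short Leibniz induction yields $D^h[r^{Q-1}\phi]=\sum_{j=0}^h c_{h,j}\,r^{Q-1-2h+j}\,\partial_r^j\phi$ with constants $c_{h,j}$ depending only on $Q,h,j$; inserting this gives the asserted identity $A^\alpha=\sum_{j=0}^h 2^{-h}c_{h,j}\,B^\alpha_{h,j}$.

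Finally, each $B^\alpha_{h,j}f(n)$ is a convergent integral that is holomorphic in $\alpha$ on $\{\Re\alpha>-h\}$: near $r=1$ the only singular factor is $(1-r^2)^{\alpha+h-1}$, integrable once $\Re(\alpha+h)>0$, and near $r=0$ the integrand is $O(r^{Q-1-2h+j})$, integrable since $h<(Q-2)/2<Q/2$. Thus the identity above, proved for $\Re\alpha>0$, persists by analytic continuation on $\{\Re\alpha>1-h\}$ (indeed on $\{\Re\alpha>-h\}$), and ranging over the admissible integers $h$ these half-planes reach $\{\Re\alpha>1-(Q-2)/2\}$, giving the continuation claimed. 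To identify $A^0$ I let $\alpha\to0^+$ in the polar formula: with $t=1-r^2$ the kernel $t^{\alpha-1}/\Gamma(\alpha)$ tends to the Dirac mass at $t=0$, so $A^\alpha f(n)\to\phi(1)=(f*\mu)(n)$, i.e. $A^0$ is convolution with $\mu$. I expect the main obstacle to be the regularity-and-evenness analysis of $\phi$ at $r=0$, where $S_1$ collapses to a point: it is the precise order of vanishing of $\phi$ and its derivatives there, dictated by the symmetry of $\mu$, that determines how many integrations by parts leave no boundary term, and hence pins down the range $h<(Q-2)/2$.
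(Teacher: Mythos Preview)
Your proof is correct and follows essentially the same route as the paper: pass to polar coordinates, perform $h$ successive integrations by parts via $(1-r^2)^{\beta-1}=-\tfrac{1}{2\beta r}\partial_r(1-r^2)^\beta$, and expand the resulting derivative by Leibniz to obtain the linear combination of the $B^\alpha_{h,j}$. Your discussion of the smoothness and evenness of $r\mapsto(f*\mu_r)(n)$ near $r=0$ is more detailed than the paper's (which simply asserts that the endpoint terms vanish for $h<(Q-2)/2$), and your identification of $A^0$ proceeds by the limit $\alpha\to0^+$ rather than by direct evaluation of the once-integrated formula, but the underlying argument is the same.
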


\begin{proof}[of Proposition~\ref{prop_analytic_cont}]
With the change in polar coordinates~(\ref{formula_polar_coordinate}):
\begin{eqnarray*}
  A^\alpha.f
&=&
  \int_{r=0}^1 
  \frac2{\Gamma(\alpha)}{(1-r^2)}^{\alpha-1}  
  (f*\mu_r) r^{Q-1} dr\\
&=&
  \left[  \frac2{\alpha\Gamma(\alpha)} {(1-r^2)}^{\alpha}  
    \, 
    \frac{1}{-2r}(f*\mu_r) r^{Q-1} \right]_{r=0}^1\\
&&\quad  -  \int_{r=0}^1 
  \frac2{\alpha\Gamma(\alpha)} {(1-r^2)}^{\alpha}
  \,
  \partial_r \left( \frac{1}{-2r}  (f*\mu_r) r^{Q-1} \right)
  dr 
  \; ,
\end{eqnarray*}
after integrating by parts.
The endpoint terms equal zero, 
and $\alpha\Gamma(\alpha)=\Gamma(\alpha+1)$; 
we easily obtain 
on one hand  for $\alpha=0$:
$A^0.f=f*\mu$,
and on the other hand after expanding the derivative:
$A^\alpha=-1/2(B_{1,1}+QB_{1,0})$.

Recursively,  using $h$ integrations by parts,
we easily compute that the operator $A^\alpha$
coincides with a linear combination
of the operators $B^\alpha_{h,j}$, 
provided that the endpoint terms equal zero,
so as long as $h<(Q-2)/2$ and $\Re \alpha >1-h$.
\end{proof}

Now we establish some $L^p$-maximal inequalities on $A^\alpha$.
We denote by $\Ac^\alpha$ the maximal function associated to $A^\alpha$
(see Subsection~\ref{subsec_maxin}).

\subsection{$L^p$-maximal inequalities for $\Re\alpha \geq 1$}
\label{subsec_Lp_max_ineq_xgeq1}

In this subsection, 
using  Corollary~\ref{cor_decfcn} and estimation~(\ref{maj_tit}),
we prove: 
\begin{prop}
\label{prop_Lp_ineq}
  For $1<p\leq \infty$, 
we have:
\begin{eqnarray*}
&\forall b\geq 1\quad
\forall x\in [1,b] \quad 
    \exists C>0 \quad
    \forall y\in \Rb \quad
  \forall f\in \Sc(N),&\\
&  \nd{\Ac^{x+iy}.f}_{L^p} 
  \leq \,C\, e^{2 \nn{y}} \nd{f}_{L^p}
  \;.&  
\end{eqnarray*}
\end{prop}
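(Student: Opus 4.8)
The plan is to bound the maximal function $\Ac^{x+iy}.f = \sup_{r>0}\nn{(A^{x+iy})_r.f}$ by dominating the convolution kernel $\mu^{x+iy}$ with an integrable, radially decreasing function, so that Corollary~\ref{cor_decfcn} applies. First I would observe that for $\Re\alpha = x \geq 1$ the kernel $\mu^\alpha(n) = 2(1-\nn{n}^2)_+^{\alpha-1}/\Gamma(\alpha)$ is a genuine integrable function supported in the unit ball $B_1$, with no boundary singularity (since $\Re(\alpha-1) = x-1 \geq 0$). Taking absolute values, $\nn{\mu^{x+iy}(n)} = 2(1-\nn{n}^2)_+^{x-1}/\nn{\Gamma(x+iy)}$, because $(1-\nn{n}^2)_+^{iy}$ has modulus one on the support. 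Thus the pointwise modulus of the kernel is controlled by $m^x(\nn{n})/\nn{\Gamma(x+iy)}$ up to the factor $\nn{\Gamma(x)}$, i.e.\ by a positive radial function whose radial profile $r \mapsto m^x(r)$ is decreasing on $[0,1]$ and vanishes outside.

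The key step is to make the $y$-dependence explicit and uniform. Here I would invoke estimate~(\ref{maj_tit}): on the compact interval $[1,b]$ there is a constant so that $\nn{\Gamma(x+iy)}^{-1} \leq C\, e^{-\frac{\pi}{2}\nn{y}}\nn{y}^{\frac12-x}$ uniformly in $y$. Combined with $e^{-\frac{\pi}{2}\nn{y}}\nn{y}^{\frac12-x} \leq C' e^{2\nn{y}}$ (indeed $e^{-\frac\pi2\nn{y}}$ already decays, so any positive-exponential bound such as $e^{2\nn{y}}$ holds with room to spare for large $\nn{y}$, while the local behaviour near $y=0$ is handled by the lower bound in~(\ref{maj_tit}) keeping $\nn{\Gamma(x+iy)}$ away from zero on compacts), this gives a pointwise domination
$$
\nn{\mu^{x+iy}(n)} \,\leq\, C\, e^{2\nn{y}}\, F_x(n),\qquad F_x(n) := m^x(\nn{n}),
$$
where $F_x$ is a fixed integrable function (for fixed $x$) of the radially decreasing form required by Corollary~\ref{cor_decfcn}. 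The constant $C$ is uniform in $y\in\Rb$ and in $x\in[1,b]$.

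From this domination the proposition follows almost formally. Since $\nn{(A^{x+iy})_r.f} = \nn{f*(\mu^{x+iy})_r} \leq e^{2\nn{y}}\,C\,(|f|*(F_x)_r)$ pointwise (the dilation commutes with taking moduli and with the radial-decreasing structure, as $(F_x)_r(n)=r^{-Q}F_x(r^{-1}.n)$ is again radially decreasing), taking the supremum over $r>0$ yields $\Ac^{x+iy}.f \leq C\,e^{2\nn{y}}\,\Mc_{F_x}(\nn{f})$, where $\Mc_{F_x}$ is the maximal operator of Corollary~\ref{cor_decfcn} for the function $F_x$. Applying that corollary, which gives $L^p$-boundedness of $\Mc_{F_x}$ for $1<p\leq\infty$, produces the desired inequality $\nd{\Ac^{x+iy}.f}_{L^p} \leq C\,e^{2\nn{y}}\,\nd{f}_{L^p}$.

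The main obstacle I anticipate is bookkeeping the uniformity of the constant $C$ simultaneously in $x\in[1,b]$ and in all $y\in\Rb$, rather than any deep difficulty. The integrability norm $\nd{F_x}_{L^1}$ and hence the operator norm from Corollary~\ref{cor_decfcn} must be controlled uniformly as $x$ ranges over the compact $[1,b]$; this is where one uses that $m^x$ depends continuously on $x$ and that the interval is compact and bounded away from the pole at $\alpha=0$. The passage from the Gamma-function decay $e^{-\frac\pi2\nn{y}}\nn{y}^{\frac12-x}$ to the crude bound $e^{2\nn{y}}$ must absorb both the polynomial factor $\nn{y}^{\frac12-x}$ (harmless, since for $x\geq 1$ it is bounded) and the small-$\nn{y}$ regime, but the lower estimate in~(\ref{maj_tit}) precisely guarantees $\nn{\Gamma(x+iy)}$ stays bounded below on compacts, so no singularity arises. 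Once these uniformity points are checked, the argument is routine.
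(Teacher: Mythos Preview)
Your approach is essentially identical to the paper's: reduce $\nn{\mu^{x+iy}}$ to a constant times $\mu^x$ via the ratio $\nn{\Gamma(x)/\Gamma(x+iy)}$, control that ratio by $Ce^{2\nn{y}}$ using~(\ref{maj_tit}), then apply Corollary~\ref{cor_decfcn} to the radially decreasing kernel $\mu^x$ for $x\geq 1$. One small slip: from~(\ref{maj_tit}) one gets $\nn{\Gamma(x+iy)}^{-1}\leq C\,e^{+\frac{\pi}{2}\nn{y}}\nn{y}^{\frac12-x}$, not $e^{-\frac{\pi}{2}\nn{y}}$; the conclusion is unaffected since $\frac{\pi}{2}<2$ and $\nn{y}^{\frac12-x}$ is bounded for $x\geq 1$, but your parenthetical remark that ``$e^{-\frac{\pi}{2}\nn{y}}$ already decays'' is based on the wrong sign.
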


\begin{proof}[of Proposition~\ref{prop_Lp_ineq}]
For $x>0$, and $ f\in \Sc(N)$,
we have:
$$
  \nn{A^{x+iy}.f}
  \leq
  \nn{f}*\nn{\mu^{x+iy}} 
=
\nn{\frac{\Gamma(x)}{\Gamma(x+iy)}} 
\nn{f}*\mu^x
  \leq
C e^{2  \nn{y}} \,A^x.\nn{f}
    \; .
$$
uniformly in $y\in \Rb$, locally for $x>0$,
because of~(\ref{maj_tit}).
It yields to 
$\Ac^{x+iy}.f\leq C e^{2  \nn{y}} \Ac^x.\nn{f}$.
Now, For $x\geq 1$, $\mu^x$ satisfies the hypothesis 
of Corollary~\ref{cor_decfcn}, 
and the maximal function $\Ac^x$ is $L^p$-bounded, 
$1<p\leq\infty$. 
\end{proof}

\subsection{$L^p$-maximal inequalities for $\Re\alpha<0$}
\label{subsec_Lp_max_ineq_x<0}

In this subsection, 
we fix $h=1,2,\ldots$
and we prove: 
\begin{prop}
\label{prop_L2_ineq}
If the square functions $S^j$, $j=1,\ldots,h$
satisfy $L^2$-estimates,
then for all segments $[a,b]\subset]-h+1, \infty[$,
there exists a constant $C>0$ such that:
  $$
\forall x\in [a,b] \quad 
    \forall y\in \Rb \quad
  \forall f\in \Sc(N),\quad 
  \nd{\Ac^{x+iy}.f}_{L^2} \leq \,C\, e^{2 \nn{y}} \nd{f}_{L^2}
  \;.
  $$
\end{prop}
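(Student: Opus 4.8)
The plan is to reduce the statement to the decomposition of Proposition~\ref{prop_analytic_cont} and then to bound, one by one, the maximal functions of the building blocks $B^\alpha_{h,j}$. On the segment $\Re\alpha=x\in[a,b]\subset\,]1-h,\infty[$ (with $h<(Q-2)/2$, so that Proposition~\ref{prop_analytic_cont} applies) the operator $A^\alpha$ is an $\alpha$-independent linear combination $A^\alpha=\sum_{j=0}^h c_{h,j}B^\alpha_{h,j}$, the $\Gamma(\alpha)$ factors having been absorbed into $m^{\alpha+h}$ and the coefficients $c_{h,j}$ being rational in $Q$. Hence the dilations $(A^\alpha)_t$ are the same combination of $(B^\alpha_{h,j})_t$, and the elementary inequality ``supremum of a sum is at most the sum of suprema'' yields the pointwise domination $\Ac^\alpha.f\leq\sum_{j=0}^h\nn{c_{h,j}}\,\Bc^\alpha_{h,j}.f$, where $\Bc^\alpha_{h,j}$ is the maximal function of $B^\alpha_{h,j}$. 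It thus suffices to prove, for each $j$, that $\nd{\Bc^\alpha_{h,j}.f}_{L^2}\leq C e^{2\nn{y}}\nd{f}_{L^2}$, uniformly for $x\in[a,b]$ and $y\in\Rb$.

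The computational heart is the scaling of $B^\alpha_{h,j}$. Using the composition law $(A_r)_t=A_{rt}$ for the dilated spherical operators and the substitution $u=rt$, I would first establish
\[
(B^\alpha_{h,j})_t.f(n)=t^{j-1}\int_0^t m^{\alpha+h}(u/t)\,(u/t)^{Q-1-2h+j}\,\partial_u^j(f*\mu_u)(n)\,du .
\]
For $j\geq1$ I split off the weight $u^{(2j-1)/2}$ and apply the Cauchy--Schwarz inequality in $u$: the factor carrying $\partial_u^j(f*\mu_u)\,u^{(2j-1)/2}$ integrates to at most $S^j(f)(n)^2$ by the very definition of the square function, while the complementary factor, after the substitution $w=u/t$, becomes
\[
t^{2(j-1)}\int_0^t\nn{m^{\alpha+h}(u/t)}^2(u/t)^{2(Q-1-2h+j)}u^{-(2j-1)}\,du=\int_0^1\nn{m^{\alpha+h}(w)}^2w^{2Q-4h-1}\,dw=:W .
\]
The crucial point is that every power of $t$ cancels, so $W$ is independent of $t$; taking the supremum over $t$ gives the pointwise bound $\Bc^\alpha_{h,j}.f\leq\sqrt W\,S^j(f)$. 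The assumed $L^2$-estimate for $S^j$ then yields $\nd{\Bc^\alpha_{h,j}.f}_{L^2}\leq\sqrt W\,C_j\nd{f}_{L^2}$.

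It remains to control $\sqrt W$ and to treat the exceptional term $j=0$. Writing $\nn{m^{\alpha+h}(w)}^2=4(1-w^2)^{2(x+h-1)}/\nn{\Gamma(\alpha+h)}^2$, the integral $W$ converges because its exponent $2(x+h-1)>-1$ near $w=1$ (true since $x>1-h$) and $2Q-4h-1>-1$ near $w=0$ (true since $h<(Q-2)/2<Q/2$); the remaining $w$-integral is bounded uniformly for $x\in[a,b]$. The prefactor $1/\nn{\Gamma(\alpha+h)}$ is controlled, uniformly in $y\in\Rb$ and locally in $x$, by~(\ref{maj_tit}): its Stirling-type growth is of order $e^{\frac{\pi}{2}\nn{y}}$ and, since $\pi/2<2$, it is $\leq Ce^{2\nn{y}}$, whence $\sqrt W\leq Ce^{2\nn{y}}$. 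For $j=0$ no square function is available, so I treat $B^\alpha_{h,0}$ directly: it is convolution with the radial kernel $m^{\alpha+h}(\nn{n})\nn{n}^{-2h}$ on the unit ball, and since $x+h-1>0$ this kernel vanishes at the boundary and blows up only like $\nn{n}^{-2h}$ at the origin, so its modulus admits an integrable radial decreasing majorant precisely when $Q>2h$. Corollary~\ref{cor_decfcn} then bounds the associated maximal function on $L^2$, with operator norm controlled by the $L^1$-norm of the majorant, hence again $\leq Ce^{2\nn{y}}$ after factoring out $1/\nn{\Gamma(\alpha+h)}$. Summing the $h+1$ contributions finishes the proof.

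I expect the main obstacle to be the scaling identity for $(B^\alpha_{h,j})_t$ together with the exact cancellation of the powers of $t$ in $W$: this cancellation is precisely what upgrades a Cauchy--Schwarz bound into a genuine maximal (supremum-in-$t$) estimate, and matching the exponents of $r$, $t$ and $w$ demands careful bookkeeping. A secondary difficulty is tracking the $\Gamma$-factor uniformly in $y$ through~(\ref{maj_tit}) and checking that the derivative-free term $j=0$, which lies outside the reach of the square functions, is genuinely absorbed by the standard maximal function via Corollary~\ref{cor_decfcn}.
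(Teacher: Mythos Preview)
Your proof is correct and follows essentially the same route as the paper: the paper reduces Proposition~\ref{prop_L2_ineq} to a lemma (Lemma~\ref{lem_ineq_Bhj}) bounding each $\Bc^\alpha_{h,j}$, treating $j=0$ via the radial decreasing kernel $\mu^{\alpha+h}(n)\nn{n}^{-2h}$ and Corollary~\ref{cor_decfcn}, and $j\geq 1$ via exactly your Cauchy--Schwarz split (the paper keeps the variable $r\in[0,1]$ rather than substituting $u=rt$, but the cancellation of the $t$-powers is the same). The only cosmetic difference is that the paper evaluates the resulting Beta-type integral explicitly as $\tfrac12\,\Gamma(2x+2h-1)\Gamma(Q-2h)/\bigl(\nn{\Gamma(\alpha+h)}^2\Gamma(2x+Q)\bigr)$ before invoking~(\ref{maj_tit}), whereas you bound the $w$-integral directly by continuity on $[a,b]$; both are fine.
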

This proposition is implied by the following lemma, 
which states estimates 
for the maximal function $\Bc^\alpha_{h,j}$
associated (see Subsection~\ref{subsec_maxin})
to $B^\alpha_{h,j}$:

\begin{lem}
\label{lem_ineq_Bhj}
For all segments $[a,b]\subset]-h+1, \infty[$,
there exists a constant $C>0$ such that 
for all $x\in [a,b]$
and for all $y\in \Rb$,
we have:
\begin{eqnarray}
  \label{eq_lem_ineq_Bhj_j}
  &\forall\, f\in L^2&
  \quad
  \nd{\Bc^{x+iy}_{h,0} .f}_{L^2} 
  \,\leq\,  \;C\; e^{ 2 \nn{y}} \nd{f}_{L^2}
  \; ,\\
  \label{eq_lem_ineq_Bhj_0}
  &\forall\, f\in \Sc(N)&
  \quad
  \nn{\Bc^{x+iy}_{h,j} .f}
  \,\leq\,  \;C\; e^{2 \nn{y}} \nn{S^j.f}
  \; .
  \end{eqnarray}
\end{lem}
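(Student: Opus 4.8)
The plan is to prove the two displayed inequalities separately, since only the case $j\geq 1$ can be compared to the square function $S^j$ (which contains the derivative $\partial_s^j$). For $j=0$ there is no derivative, so instead I would recognise $B^\alpha_{h,0}$ as an honest convolution operator whose kernel is a radial, essentially decreasing $L^1$ function, and invoke Corollary~\ref{cor_decfcn}. For $j\geq 1$ the heart of the matter is a scaling computation showing that the dilated operator $(B^\alpha_{h,j})_t$ reproduces, after a change of variable, exactly the weight appearing in $S^j$, with all powers of $t$ cancelling; a Cauchy--Schwarz inequality then yields the pointwise bound by $S^j.f$.

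For the first inequality I would begin by computing the kernel of $B^\alpha_{h,0}$. Writing $B^\alpha_{h,0}.f=f*K^\alpha$ with $K^\alpha=\int_0^1 m^{\alpha+h}(r)r^{Q-1-2h}\mu_r\,dr$ and using the polar decomposition~(\ref{formula_polar_coordinate}) together with the uniqueness of $\mu$, one gets the radial function
\[
K^\alpha(n)= m^{\alpha+h}(\nn n)\,\nn n^{-2h}\,\mathbf 1_{\nn n<1}
=\frac{2(1-\nn n^2)^{\alpha+h-1}}{\Gamma(\alpha+h)}\,\nn n^{-2h}\,\mathbf 1_{\nn n<1}.
\]
For $\alpha=x+iy$ with $x\in[a,b]$ and $a>1-h$, the exponent $x+h-1$ is positive, so $\nn n\mapsto (1-\nn n^2)^{x+h-1}\nn n^{-2h}$ is decreasing; it is integrable because $h<Q/2$ tames the singularity at the origin and $x+h-1>-1$ the one at $\nn n=1$. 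Majorising $(1-\nn n^2)^{x+h-1}$ by $(1-\nn n^2)^{a+h-1}$ produces a single decreasing $L^1$ dominant $F$, independent of $x$, and~(\ref{maj_tit}) applied to $\Gamma(x+h+iy)$ gives $\nn{K^{x+iy}}\leq C e^{2\nn y}F$. Hence $\Bc^{x+iy}_{h,0}.f\leq C e^{2\nn y}$ times the maximal operator of convolution with $F$ applied to $\nn f$, and Corollary~\ref{cor_decfcn} yields the $L^2$-bound.

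For the second inequality the central step is the scaling identity
\[
(B^\alpha_{h,j})_t.f(n)= t^{2h-Q}\int_0^t m^{\alpha+h}(s/t)\,s^{Q-1-2h+j}\,\partial_s^j(f*\mu_s)(n)\,ds,
\]
which I would derive from $T_t.f=(T\,f(t.))(t^{-1}.)$, the relation $(f(t.)*\mu_s)(n)=(f*\mu_{ts})(t.n)$ (a consequence of dilations being automorphisms and $(\mu_s)_t=\mu_{ts}$), and the substitution $s=tr$. Applying Cauchy--Schwarz in $s$ and splitting $s^{Q-1-2h+j}=s^{Q-1/2-2h}\cdot s^{(2j-1)/2}$ so that the factor $\nn{\partial_s^j(f*\mu_s)(n)}\,s^{(2j-1)/2}$ rebuilds $S^j.f(n)$, I obtain
\[
\nn{(B^\alpha_{h,j})_t.f(n)}\leq t^{2h-Q}\Big(\int_0^t \nn{m^{\alpha+h}(s/t)}^2 s^{2Q-1-4h}\,ds\Big)^{1/2} S^j.f(n).
\]
The substitution $u=s/t$ turns the bracket into $t^{Q-2h}\big(\int_0^1 \nn{m^{\alpha+h}(u)}^2 u^{2Q-1-4h}\,du\big)^{1/2}$, so the powers of $t$ cancel exactly and the bound is uniform in $t$. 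Taking the supremum over $t$ gives $\Bc^\alpha_{h,j}.f(n)\leq C(\alpha)\,S^j.f(n)$ with $C(\alpha)=\big(\int_0^1 \nn{m^{\alpha+h}(u)}^2 u^{2Q-1-4h}\,du\big)^{1/2}$.

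It then remains to bound $C(x+iy)$. Since $\nn{m^{x+iy+h}(u)}^2=4(1-u^2)^{2(x+h-1)}/\nn{\Gamma(x+h+iy)}^2$, the integral $\int_0^1(1-u^2)^{2(x+h-1)}u^{2Q-1-4h}\,du$ is a finite Beta-type integral, bounded uniformly for $x\in[a,b]$ because $2(x+h-1)>-1$ and $2Q-1-4h>-1$; the factor $1/\nn{\Gamma(x+h+iy)}$ is again controlled by $C e^{2\nn y}$ via~(\ref{maj_tit}) (the constant $\pi/2<2$ absorbs the exponential growth and the polynomial factor). I expect the main obstacle to be getting the scaling computation exactly right --- in particular verifying that the powers of $t$ cancel, which is the structural reason the maximal operator is controlled at all --- and checking the two convergence conditions ($h<Q/2$ at the origin and $x>1-h$ at the boundary) that make both the kernel $K^\alpha$ and the Beta integral integrable; the $\Gamma$-estimate is then routine given~(\ref{maj_tit}).
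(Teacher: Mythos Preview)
Your proposal is correct and follows essentially the same route as the paper: for $j=0$ you identify the kernel as $\mu^{\alpha+h}(n)\nn{n}^{-2h}$ and reduce to Corollary~\ref{cor_decfcn} via the $\Gamma$-estimate~(\ref{maj_tit}), and for $j\geq 1$ you compute the dilated operator, apply Cauchy--Schwarz with the splitting $s^{Q-1-2h+j}=s^{Q-\frac12-2h}\cdot s^{j-\frac12}$, and evaluate the resulting Beta-type integral. The only cosmetic difference is that the paper applies Cauchy--Schwarz before the substitution $r'=rt$ (keeping $r\in[0,1]$ as the integration variable), whereas you substitute first; the cancellation of the $t$-powers and the final $e^{2\nn y}$ bound are identical.
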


The proof of Lemma~\ref{lem_ineq_Bhj} is based on 
 Corollary~\ref{cor_decfcn}, known formulae for $\Gamma$
and estimation~(\ref{maj_tit}).

\begin{proof}[of lemma~\ref{lem_ineq_Bhj}]
Because of the change in  polar coordinates~(\ref{formula_polar_coordinate}), 
we compute that
 $B^\alpha_{h,0}$ is the convolution operator with kernel
given by:
$n\mapsto\mu^{\alpha+h}(n)\nn{n}^{-2h}$,
$n\in N$.
Then we proceed as in the proof of Proposition~\ref{prop_Lp_ineq}
to obtain inequality~(\ref{eq_lem_ineq_Bhj_j}).

Let us prove inequality~(\ref{eq_lem_ineq_Bhj_0}).
For $f\in \Sc(N)$, we have:
$$
  {(B^\alpha_{h,j})}_t.f 
  \,=\, 
 \int_{r=0}^1 m^{\alpha+h}(r)
  r^{Q-1-2h+j} t^j [\partial_{r'}^j f*\mu_{r'}]_{|r'=rt} dr 
\quad.
$$
Thus because of H\"older's inequality,
${(\Bc^\alpha_{h,j}.f)}^2$
is majorized by the product of
\begin{eqnarray*}
\sup_{t>0}\int_{r=0}^1 \nn{ r^{j-\frac12}  t^j  
          [\partial_{r'}^j f*\mu_{r'}]_{|r'=rt} }^2dr  
&\leq&
\int_{r'=0}^\infty      
      \nn{ \partial_{r'}^j f*\mu_{r'}}^2 {r'}^{2j-1} dr'\\
&&\quad=\,
      {\left(S^j(f)\right)}^2\quad,
\end{eqnarray*}
(with the help of change of variable  $r'=rt$)
and the integral:
\begin{eqnarray*}
\int_{r=0}^1  \nn{ m^{\alpha+h}(r) r^{Q-1-2h-\frac12}}^2  dr 
\,\leq\,
  \int_{r=0}^1  \frac{{(1-r^2)}^{2(x+h-1)}}  {\nn{\Gamma(\alpha+h)}^2}
  r^{2Q-1-4h} dr 
\quad.\\
\end{eqnarray*}
This last integral equals with the change of variable $r'=r^2$,
then with known formulae concerning the $\Gamma$ function 
\cite[\S1.7 (4) and (5)]{szego}:
$$
    \int_{r'=0}^1   
\frac{ {(1-r')}^{2(x+h-1)}}
{\nn{\Gamma(\alpha+h)}^2}
    {r'}^{Q-1-2h}  \frac{dr'}2 
=
    \frac 12
    \frac{\Gamma(2x+2h-1)\Gamma(Q-2h)}
    {\nn{\Gamma(\alpha+h)}^2\Gamma(2x+Q)}
\quad.
$$
Because of estimation~(\ref{maj_tit}), 
this term is bounded up to a constant, 
locally in $x>-h+1$, uniformly in $y\in\Rb$,
 by $e^{2\nn{y}}$.
We obtain inequality~(\ref{eq_lem_ineq_Bhj_0}).
\end{proof}

This ends the proofs of Lemma~\ref{lem_ineq_Bhj}
thus of Proposition~\ref{prop_L2_ineq}.

\subsection{Interpolation}
\label{subsec_interpolation}

The maximal function $\Ac=2\Ac^0$ (Proposition~\ref{prop_analytic_cont})
satisfies an $L^\infty$-inequality because the measure $\mu$ is finite.
We have proved in Propositions~\ref{prop_L2_ineq} and~\ref{prop_Lp_ineq},
that the family $\Ac^\alpha$ satisfies  $L^p$-maximal inequalities.
Using linearisation and interpolation of an analytic family,
we obtain that the maximal function $\Ac^0$ satisfies $L^p$-inequalities
for $(2h)/(2h-1)<p\leq \infty$ as long as 
the square functions $S^j$, $1\leq j\leq h$ 
satisfy $L^2$-estimates
and $1<h<(Q-2)/2$
(so $v\geq 3$).

We have proved  Theorem~\ref{thm_Lp_Ac}.b).
This achieves the proof of  Theorem~\ref{thm_Lp_Ac},
and thus of our main theorem,
 provided that we show Theorem~\ref{thm_L2_S}.
The last section (Section~\ref{sec_squarefcn}) 
is devoted to this,
but we start (Section~\ref{sec_not_tool})
by recalling our tools: 
the Plancherel formula and bounded spherical functions.


\section{The Bounded Spherical Functions and the Plancherel Formula }
\label{sec_not_tool}

Here we present the tools
that we will need in the proof of Theorem~\ref{thm_L2_S}:
the Plancherel formula
and bounded spherical functions 
on $N_v$.
For this results, 
we refer the interested reader to \cite{moi}.

\subsection{Action of \mathversion{bold}{$O(v)$}}
\label{subsec_radial_fcn}

With the canonical basis,
the vector space $\Zc$ can be identified with the vector space of
skew-symmetric $v\times v$-matrices $\ssm$.

The group $O(v)$ 
acts on $\Vc_v\sim \Rb^v$ and $\Zc_v\sim\ssm $ by auto-morphisms :
$$
\left\{\begin{array}{rcl}
    O(v)\times \Vc_v
    &\longrightarrow& \Vc_v\\
    (k,X)&\longmapsto&k.X
  \end{array}\right.
\quad\mbox{and}\quad
\left\{\begin{array}{rcl}
    O(v)\times \Zc_v
    &\longrightarrow& \Zc_v\\
    (k,A)&\longmapsto&k.A=kAk^{-1} 
  \end{array}\right. \quad.
$$
We get easily
$k.[X,X']=[k.X,k.X']$, $k\in O(v)$, $X,X'\in\Vc_v$.
So, $O(v)$ acts (by auto-morphisms) on the Lie algebra $\Nc_v$, 
then on the group $N_v$.

A distribution or a left invariant differential operator on $N_v$ 
is said to be \textit{radial},
if it is $O(v)$-invariant.

\subsection{Special functions}
\label{subsec_specialfcn}

First, let set the notations for ``our'' Hermite, Laguerre and Bessel functions.
We denote:
  \begin{itemize}
\item $h_l,l\in\Nb$ the Hermite-Weber functions
 on $\Rb$ given by \cite[\S 5.5]{szego}:
$$
h_l(x)
\,=\,
{(2^l l!\sqrt{\pi})}^{-\frac l2} e^{-\frac{x^2}2} H_l(x)
\quad\mbox{where}\quad
H_l(s)={(-1)}^l e^{s^2}{(d/ds)}^l e^{-s^2}
\; ,$$
\item $\flsz n$, 
the Laguerre function
given by $\flz n x=L_n^{0}(x)e^{-\frac{x}{2}}$
where $L_n^{0}$ is the Laguerre polynomial 
of type $0$ and degree $n$ \cite[\S 5.1]{szego},
\item  $\fbs \alpha$ the reduced Bessel function:
$\fb \alpha{z}:=
\Gamma (\alpha+1){(z/2)}^{-\alpha}J_\alpha(z)$,
where $J_\alpha$ is the Bessel function of type $\alpha>0$
\cite[\S 1.71]{szego}, \cite[ch. II, I.1]{faraut_h}.
\end{itemize}

Now, we can give some spherical  (or equivalently of positive type) functions of $(N_v,O(v))$.
We denote:
\begin{itemize}
\item $dk$ the Haar probability measure of $O(v)$,
\item for $\Lambda=(\lambda_1,\ldots,\lambda_{v'})\in\Rb^{v'}$, 
$D_2(\Lambda)$ an element of $\Zc_v^*$ identified with the skew-symmetric matrix:
$$
D_2(\Lambda)
\,:=\,
\left[
  \begin{array}{cccc}
    \lambda_1 J &&\\
    0 & \ddots & 0&\\
    &&\lambda_{v'} J&\\
    &&&(0)
  \end{array}\right]
\quad \mbox{where}\quad
    J:=
    \left[  \begin{array}{cc}
        0&1\\
        -1&0
      \end{array}\right]\quad.
$$
($(0)$ means that a zero appears only in the case $v=2v'+1$.)
\item $X_1^*,\ldots,X_v^*$ the dual base of $X_1,\ldots,X_v$,
\item $\mbox{pr}_j$ the orthogonal projection on $\Rb X_{2j-1}\oplus\Rb X_{2j}$, 
for $j=1,\ldots,v'$,
\item $\Lc$ the set of
  $\Lambda=(\lambda_1,\ldots,\lambda_{v'})\in\Rb^{v'}$ such that
  $\lambda_1\,>\, \ldots \,>\, \lambda_{v'}\,>\,0$,
\item $\Mc$ 
  the set of
  $(r,\Lambda)$  with $\Lambda\in \Lc$
  and $r>0$ if $v=2v'+1$ or $r=0$ if $v=2 v'$,
\item $\Pc$
  the set of
  $(r,\Lambda,l)$  with $(r,\Lambda)\in \Mc$ and $l\in \Nb^{v'}$.
\end{itemize}

For $(r,\Lambda,l)\in \Pc$,
we define 
the function
$\phi^{r,\Lambda,l}$ by:
$$
\phi^{r,\Lambda,l}(n)
\,=\,
\int_{O(v)}
\Theta^{r,\Lambda,l,\epsilon}(k.n)
dk,
\quad n\in N_v
\quad,
$$
where $\Theta^{r,\Lambda,l}$ is given by:
$$
\Theta^{r,\Lambda,l}(\exp (X+A))
\,=\,
e^{i <rX^*_v,X>}
e^{i<D_2(\Lambda),A>}
\overset{v'}
{\underset{j=1}\Pi}
\flz {l_j} {\frac{\lambda_j}2 \nn{\pr {j} X }^2}
\quad.
$$
The function $\phi^{r,\Lambda,l}$
is a spherical function of $(N_v,O(v))$.
In the rest of this article,
we identify a function
$\phi^{r,\Lambda,l}$,
with its parameter
$(r,\Lambda,l)\in\Pc$.

\subsection{The Plancherel Formula}
\label{subsec_plancherel}

Let $(r,\Lambda)\in\Mc$.
Let define the representation
$(L^2(\Rb^{v'}),\Pi_{r,\Lambda})\in\hat{N}_v$ 
by:
\begin{eqnarray*}
 \Pi_{r,\Lambda}(n).f(y)
\,=\,
\exp \left( i 
\sum_{j=1}^{v'}
  \frac {\lambda_j}2 x_{2j}x_{2j-1} +\sqrt{\lambda_j} x_{2j}y_j\right)
e^{irx_v+i<D_2(\Lambda),A>}\\
f(y_1+\sqrt{\lambda_1}x_1,\ldots,y_{v'}+\sqrt{\lambda_{v'}}x_{2v'-1})
\quad ,
\end{eqnarray*}
where $f\in L^2(\Rb^{v'})$,
$(y_1,\ldots,y_{v'})\in\Rb^{v'}$,
$n=\exp(X+A)\in N_v$ with $X=\sum_{j=1}^v x_jX_j$.

The group $O(v)$ acts on $N_v$ thus also on $\hat{N}_v$. 
In particular, we denote:
$$
k.\Pi_{r,\Lambda}(n)=
\Pi_{r,\Lambda}(k^{-1}.n)
\quad,\quad k\in O(v),\quad n\in N_v\quad.
$$

We denote 
$\eta$
  the measure on $\Lc$ given by:
$$
d\eta(\Lambda)=
\left\{\begin{array}{ll}
  \overset{v'}{\underset{i=1}{\Pi}} \lambda_i
  \underset{j<k}\Pi {(\lambda_j^2-\lambda_k^2)}^2 
  d\lambda_1\ldots d\lambda_{v'}
&
\mbox{if}\; v=2v',\\
  \overset{v'}{\underset{i=1}{\Pi}} \lambda_i^3
  \underset{j<k}\Pi {(\lambda_j^2-\lambda_k^2)}^2 
  d\lambda_1\ldots d\lambda_{v'}
&
\mbox{if}\; v=2v'+1,\\
\end{array}\right.
$$
and by $\tau$ the Lebesgue measure on $\Rb^{*+}$ if $v=2v'+1$,
and the Dirac measure in 0 if $v=2v'$. 
Let $m$ be the measure on $\Mc\times O(v)$
given as the tensor product of $\tau$,
$\eta$ and $dk$, up to a normalizing constant
 so that we have the Plancherel formula:
\begin{thm}
\label{thm_plancherel}
  $m$ is the Plancherel measure of $N_v$:
  $$
\forall f\in\Sc(N_v)\qquad
  \nd{f}_{L^2}^2
  \,=\,
\int
      \nd{k.\Pi_{r,\Lambda}(f) }_{HS}^2
      dm(r,\Lambda,k)
\quad,
  $$
where $\nd{.}_{HS}$ denotes  the norm of an Hilbert-Schmidt operator.
\end{thm}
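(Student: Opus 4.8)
The plan is to derive this Plancherel formula from the Kirillov orbit method, arranging the computation so that the three factors of $m=\tau\otimes\eta\otimes dk$ appear as geometrically distinct contributions: an angular part ($dk$), a Weyl-type Jacobian of orthogonal diagonalization, and the Pfaffian ``size'' of the generic representation. First I would use that $N_v$, being connected, simply connected and nilpotent, is of type~I, so that an abstract Plancherel measure on $\hat N_v$ exists, and that by Kirillov's theory its irreducible representations are attached (by real polarization) to coadjoint orbits in $\Nc_v^*$; here $\Pi_{r,\Lambda}$ corresponds to the linear form $\ell_{r,\Lambda}=r\,X_v^*+\langle D_2(\Lambda),\cdot\rangle$. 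Because $\Nc_v$ is two-step, the orbit through a form $\ell$ is governed by the restriction $\ell|_{\Zc}$ together with the skew form $B_\ell(X,X')=\ell([X,X'])$ it induces on $\Vc\sim\Rb^v$. Under the identification $\Zc\sim\ssm$, the restriction $\ell|_{\Zc}$ is a skew-symmetric matrix; the spectral theorem brings a generic one to the canonical block form $D_2(\Lambda)$ by some $k\in O(v)$, with $\lambda_1>\cdots>\lambda_{v'}>0$. For $v=2v'$ the form is nondegenerate and $r=0$, while for $v=2v'+1$ the remaining one-dimensional kernel carries the extra linear parameter $r$; this yields the parametrization of generic orbits by $(r,\Lambda,k)\in\Mc\times O(v)$ and explains the role of the $O(v)$-action.

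Next I would compute the density. Transporting Lebesgue measure on $\Zc^*\sim\ssm$ (and on the kernel direction in the odd case) into the coordinates $(\Lambda,k)$ is a Weyl-integration calculation whose Jacobian factors as a polynomial density in $\Lambda$ times $dk$ on $O(v)$: the repulsion between the eigenvalue pairs $\pm i\lambda_j$ produces the discriminant $\prod_{j<k}(\lambda_j^2-\lambda_k^2)^2$, and in the odd case the coupling of each pair to the zero eigenvalue contributes an extra $\prod_j\lambda_j^2$ (a degree count against the orbit dimension $2(v')^2$ for $v=2v'+1$, resp. $2v'(v'-1)$ for $v=2v'$, confirms these powers). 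On top of this one must insert the genuine Plancherel density, which for a two-step group is $|\mathrm{Pf}(B_\ell)|$, the Pfaffian of the nondegenerate part of $B_\ell$; here $\mathrm{Pf}(D_2(\Lambda))=\prod_{j=1}^{v'}\lambda_j$. Multiplying the diagonalization Jacobian by this Pfaffian reproduces $d\eta$ exactly --- the weight $\prod_i\lambda_i$ when $v=2v'$ and $\prod_i\lambda_i^3$ when $v=2v'+1$, each times $\prod_{j<k}(\lambda_j^2-\lambda_k^2)^2$ --- while $\tau$ (Lebesgue in $r$ for odd $v$, the Dirac mass at $0$ for even $v$) records the integration over the kernel direction isolated in the first step.

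Finally I would pin the identity and its normalizing constant down by testing Fourier inversion at the identity: since $N_v$ is unimodular, $\nd{f}_{L^2}^2=(f^{*}*f)(e)$ and $\tr{\Pi_{r,\Lambda}(f^{*}*f)}=\nd{\Pi_{r,\Lambda}(f)}_{HS}^2$, so writing the orbital integral $(f^{*}*f)(e)=\int \tr{\Pi_\ell(f^{*}*f)}\,d\nu(\ell)$ and evaluating the traces through Kirillov's character formula identifies $d\nu$ with $dm$ and fixes the constant. The main obstacle I expect is precisely the bookkeeping of this density: one must cleanly separate the Weyl Jacobian of the orthogonal diagonalization from the Pfaffian Plancherel factor, handle the even/odd dichotomy in which the zero eigenvalue of the odd case simultaneously supplies the parameter $r$ and modifies the eigenvalue weight, and fix the single global normalization. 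Once these are correctly disentangled, the product structure $m=\tau\otimes\eta\otimes dk$ and the stated formula follow; full details are in \cite{moi}.
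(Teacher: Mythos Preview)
The paper does not actually prove this theorem: Section~\ref{sec_not_tool} opens by announcing that the Plancherel formula and the bounded spherical functions are \emph{tools}, and for the proofs refers the reader to~\cite{moi}. So there is no ``paper's own proof'' to compare with; the statement is quoted as background.

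Your outline is the standard and correct route for a two-step nilpotent group and is almost certainly what is carried out in~\cite{moi}. The ingredients are right: Kirillov's orbit picture parametrizes the generic dual by linear forms whose central restriction is a skew-symmetric $v\times v$ matrix; orthogonal diagonalization to $D_2(\Lambda)$ supplies the $O(v)$-factor and the Weyl--type Jacobian $\prod_{j<k}(\lambda_j^2-\lambda_k^2)^2$ (with the extra $\prod_j\lambda_j^2$ in the odd case from the zero eigenvalue); and the genuine Plancherel weight for a two-step group is the Pfaffian $\prod_j\lambda_j$ of the symplectic form $B_\ell$ on $\Vc$. Multiplying these gives exactly the two expressions for $d\eta$, and the kernel direction in the odd case accounts for $\tau$ and the parameter $r$. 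Your dimension checks are also correct. The only places where care is needed are the ones you flag yourself: keeping straight that the $\prod_j\lambda_j^2$ in the odd case comes from the Weyl Jacobian while the single $\prod_j\lambda_j$ is the Pfaffian, and fixing the overall normalizing constant (which the paper absorbs into the definition of $m$ anyway). One minor point: be attentive to $O(v)$ versus $SO(v)$ in the Weyl integration, since in the even case the extra Weyl group element flips the sign of one $\lambda_j$, but restricting to the chamber $\lambda_1>\cdots>\lambda_{v'}>0$ handles this.
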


For $l=(l_1,\ldots,l_{v'})\in\Nb^{v'}$, 
let $\zeta_l\in L^2(\Rb^{v'})$ 
be given by:
$
\zeta_l
(y_1,\ldots,y_{v'})
=\Pi_{j=1}^{v'}
h_{l_j}(y_j)$.
The vectors $\zeta_l$, $l\in\Nb^{v'}$, 
form an orthonormal basis of $L^2(\Rb^{v'})$ .
If $f$ is radial,
the Hilbert-Schmidt norm of~$\Pi(f)$,
 can be computed with this basis, in term of spherical functions:
\begin{prop}
\label{prop_pi(f)_rad}
Let $f$ be a radial integrable function 
(or a radial finite measure) on $N_v$.
We have:
$$
\Pi_{r,\Lambda}(f).\zeta_l
\,=\,
<f,\phi^{r,\Lambda,l}>
\zeta_l
\quad.
$$
\end{prop}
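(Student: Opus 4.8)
The plan is to prove the identity in two stages: first to recognise the scalar $<f,\phi^{r,\Lambda,l}>$ as a diagonal matrix coefficient of $\Pi_{r,\Lambda}(f)$, and then to upgrade this information on a single diagonal entry to the full eigenvector relation by exploiting the action of $O(v)$. For the first stage I would show that $\Theta^{r,\Lambda,l}$ is exactly the diagonal matrix coefficient of the representation against the Hermite basis, that is
$$
\Theta^{r,\Lambda,l}(n) \,=\, <\Pi_{r,\Lambda}(n).\zeta_l,\zeta_l>\quad,\quad n\in N_v\quad.
$$
This is a classical computation: the explicit formula for $\Pi_{r,\Lambda}$ factors as a tensor product over the $v'$ two-dimensional blocks $\pr{j}{\Vc}$, the central variable $A$ and (when $v=2v'+1$) the variable attached to $X_v$ only contributing the scalar phases $e^{i<D_2(\Lambda),A>}$ and $e^{i<rX_v^*,X>}$; on each block one is then computing the matrix coefficient of a Schr\"odinger representation of a Heisenberg group against the real Hermite functions $h_{l_j}$, and the standard Hermite--Laguerre identity produces precisely the factor $\flz{l_j}{\frac{\lambda_j}2\nn{\pr{j}{X}}^2}$. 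Granting this, the $O(v)$-average in the definition of $\phi^{r,\Lambda,l}$ becomes harmless under the pairing: since $dn$ is invariant under the automorphisms $n\mapsto k.n$ and $f(k^{-1}.n)=f(n)$ by radiality,
$$
<f,\phi^{r,\Lambda,l}> \,=\, \int_{O(v)}\int_N f(n)\,\Theta^{r,\Lambda,l}(k.n)\,dn\,dk \,=\, \int_N f(n)\,\Theta^{r,\Lambda,l}(n)\,dn \,=\, <\Pi_{r,\Lambda}(f).\zeta_l,\zeta_l>\quad.
$$

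The second stage is to show that $\zeta_l$ is a genuine eigenvector and not merely that its diagonal entry is correct, i.e. that $<\Pi_{r,\Lambda}(f).\zeta_l,\zeta_{l'}>=0$ for $l\neq l'$. For this I would use the torus $T\simeq SO(2)^{v'}\subset O(v)$ of simultaneous rotations in the planes $\pr{j}{\Vc}$. Each such rotation commutes with the block $\lambda_j J$ and fixes $X_v$, so $T$ stabilises the data $(r,\Lambda)$ and hence the equivalence class of $\Pi_{r,\Lambda}$; let $\rho(k)$ denote an intertwiner with $\rho(k)\,\Pi_{r,\Lambda}(n)\,\rho(k)^{-1}=\Pi_{r,\Lambda}(k^{-1}.n)$. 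Using once more the invariance of $dn$ and the radiality of $f$,
$$
\rho(k)\,\Pi_{r,\Lambda}(f)\,\rho(k)^{-1} \,=\, \int_N f(n)\,\Pi_{r,\Lambda}(k^{-1}.n)\,dn \,=\, \int_N f(k.n)\,\Pi_{r,\Lambda}(n)\,dn \,=\, \Pi_{r,\Lambda}(f)\quad,
$$
so $\Pi_{r,\Lambda}(f)$ commutes with the whole commutative family $\{\rho(k)\}_{k\in T}$. Now $\rho$ is the oscillator action of $T$, which is diagonalised by the Hermite basis, $\rho(k).\zeta_l=\chi_l(k)\zeta_l$, with a character $\chi_l$ of $T$ depending injectively on $l$; the joint eigenlines of $\{\rho(k)\}$ are therefore exactly the lines $\Cb\zeta_l$. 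An operator commuting with all $\rho(k)$ preserves each of these lines, whence $\Pi_{r,\Lambda}(f).\zeta_l=c_l\,\zeta_l$ for some scalar $c_l$, and the first stage identifies $c_l=<\Pi_{r,\Lambda}(f).\zeta_l,\zeta_l>=<f,\phi^{r,\Lambda,l}>$.

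I expect the main obstacle to be the first stage, the explicit evaluation of the diagonal matrix coefficient: one must unwind the tensor structure of $\Pi_{r,\Lambda}$, fix compatible normalisations of the Hermite and Laguerre functions, and apply the correct form of the Hermite--Laguerre generating identity so as to reproduce exactly the factor $\flz{l_j}{\frac{\lambda_j}2\nn{\pr{j}{X}}^2}$ appearing in $\Theta^{r,\Lambda,l}$. The second stage is conceptually the place where radiality is essential, but once the oscillator action of $T$ and the injectivity of $l\mapsto\chi_l$ are available it is routine.
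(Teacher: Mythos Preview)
The paper does not actually prove this proposition: Section~\ref{sec_not_tool} explicitly refers the reader to \cite{moi} for the results stated there, and Proposition~\ref{prop_pi(f)_rad} is simply quoted. So there is no in-paper argument to compare against.

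Your two-stage proof is the natural one and is correct. The identity $\Theta^{r,\Lambda,l}(n)=\langle \Pi_{r,\Lambda}(n)\zeta_l,\zeta_l\rangle$ in stage one is precisely the tensor product of the classical Heisenberg computation (diagonal matrix coefficients of the Schr\"odinger representation in the Hermite basis are the Laguerre functions $\flsz{l_j}$), and the use of radiality together with the $O(v)$-invariance of $dn$ to replace $\phi^{r,\Lambda,l}$ by $\Theta^{r,\Lambda,l}$ in the pairing is straightforward. In stage two, the torus $T=SO(2)^{v'}$ does stabilise $(r,D_2(\Lambda))$, and the intertwiners $\rho(k)$ are the metaplectic (oscillator) operators. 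One small caveat: this action is a priori only projective on $T$ (the Hermite weights are half-integers $l_j+\tfrac12$), so $l\mapsto\chi_l$ is not literally a family of characters of $T$; but this does not affect the argument, since the joint eigenlines of the family $\{\rho(k)\}$ are well-defined regardless of the scalar ambiguity and are exactly the lines $\Cb\zeta_l$. Commutation of $\Pi_{r,\Lambda}(f)$ with each $\rho(k)$ then forces it to preserve those lines, and the diagonal-entry computation from stage one identifies the eigenvalue.
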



\section{Square functions}
\label{sec_squarefcn}

This section is devoted to the proof of
Theorem~\ref{thm_L2_S}.
Actually, we will prove (Subsection~\ref{subsec_S_hat_S})
that the $L^2$-norm of square functions $S^j$,
$j=1,2,\ldots$,
is bounded by the $L^\infty$-norm of :
$$ 
\hat{S}^j(\phi)
:=
\sqrt{\int_{s=0}^\infty
{\nn{\partial_s^j<\mu_s,\phi>}}^2 s^{2j-1}ds}
\quad, \phi\in\Pc
\quad:
$$
In this section,
 $\nd{.}$ denotes the $L^2(N_v)$-norm.
\begin{prop}
\label{prop_S_hat_S}
We have for $f\in\Sc(N_v)$ and $j\geq 1$:
$$
\nd{S^j(f)}\leq
\sup_{\phi\in\Pc}\hat{S}^j(\phi)
\;\nd{f}
\quad.
$$
\end{prop}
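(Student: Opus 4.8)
The goal is to bound the $L^2(N_v)$-norm of the square function $S^j(f)$ by the supremum over spherical parameters of $\hat S^j(\phi)$, times $\nd f$. The plan is to use the Plancherel formula of Theorem~\ref{thm_plancherel} to transfer the computation to the spectral side, where the radiality of $\mu_s$ and Proposition~\ref{prop_pi(f)_rad} diagonalize the convolution operators. First I would write out $\nd{S^j(f)}^2$ using the definition of $S^j$ and Fubini to interchange the $n$-integration (i.e. the $L^2(N_v)$-norm) with the $s$-integration:
\begin{equation*}
\nd{S^j(f)}^2
=
\int_{s=0}^\infty
\nd{\partial_s^j(f*\mu_s)}^2
\, s^{2j-1}\, ds
\quad.
\end{equation*}
Then for each fixed $s$ I would apply the Plancherel formula to the function $\partial_s^j(f*\mu_s)$, noting that differentiation in $s$ commutes with the group Fourier transform, so that $\Pi_{r,\Lambda}\bigl(\partial_s^j(f*\mu_s)\bigr)=\partial_s^j\bigl(\Pi_{r,\Lambda}(f)\Pi_{r,\Lambda}(\mu_s)\bigr)=\Pi_{r,\Lambda}(f)\,\partial_s^j\Pi_{r,\Lambda}(\mu_s)$, using that convolution becomes a product of operators under $\Pi_{r,\Lambda}$.

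The key step is then to exploit radiality. Since $\mu_s$ is a radial (finite) measure, Proposition~\ref{prop_pi(f)_rad} says that $\Pi_{r,\Lambda}(\mu_s)$ acts diagonally on the orthonormal basis $\zeta_l$, with eigenvalue $<\mu_s,\phi^{r,\Lambda,l}>$. Hence $\partial_s^j\Pi_{r,\Lambda}(\mu_s)$ is also diagonal, with eigenvalue $\partial_s^j<\mu_s,\phi^{r,\Lambda,l}>$, which is precisely the quantity appearing inside $\hat S^j(\phi)$. I would compute the Hilbert-Schmidt norm in this basis: writing $\Pi_{r,\Lambda}(f)\zeta_{l'}=\sum_l c_{l,l'}\zeta_l$, the composition $\Pi_{r,\Lambda}(f)\,\partial_s^j\Pi_{r,\Lambda}(\mu_s)$ has matrix entries $c_{l,l'}\,\partial_s^j<\mu_s,\phi^{r,\Lambda,l'}>$ (multiplying column $l'$ by the corresponding eigenvalue). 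Therefore
\begin{equation*}
\nd{\Pi_{r,\Lambda}(f)\,\partial_s^j\Pi_{r,\Lambda}(\mu_s)}_{HS}^2
=
\sum_{l,l'} \nn{c_{l,l'}}^2
\,\nn{\partial_s^j<\mu_s,\phi^{r,\Lambda,l'}>}^2
\quad.
\end{equation*}

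The remaining work is to integrate this in $s$ against $s^{2j-1}\,ds$ and against the Plancherel measure $dm(r,\Lambda,k)$. After the $s$-integration the factor $\nn{\partial_s^j<\mu_s,\phi^{r,\Lambda,l'}>}^2 s^{2j-1}$ integrates to $\hat S^j(\phi^{r,\Lambda,l'})^2$, which I would bound above by $\sup_{\phi\in\Pc}\hat S^j(\phi)^2$, pulling this supremum out as a constant. What survives is $\sum_{l,l'}\nn{c_{l,l'}}^2=\nd{\Pi_{r,\Lambda}(f)}_{HS}^2$, and integrating this against $dm$ returns $\nd f^2$ by the Plancherel formula applied to $f$ itself. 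The one subtlety requiring care is the invariance under the $O(v)$-action: the Plancherel measure lives on $\Mc\times O(v)$ and involves $k.\Pi_{r,\Lambda}$, so I must check that the eigenvalue $<\mu_s,\phi^{r,\Lambda,l}>$ is unchanged under replacing $\Pi_{r,\Lambda}$ by $k.\Pi_{r,\Lambda}$ — this holds because $\mu_s$ is radial and $\phi^{r,\Lambda,l}$ was defined by averaging $\Theta^{r,\Lambda,l}$ over $O(v)$, so the diagonal action is genuinely $k$-independent and the $dk$-integration only acts on the $\nd{k.\Pi_{r,\Lambda}(f)}_{HS}^2$ factors. Justifying the interchanges of $\sum$, $\int ds$, and $\int dm$ (by positivity, Tonelli, for $f\in\Sc(N_v)$) is the main technical obstacle, but it is routine; the conceptual content is entirely the diagonalization plus Plancherel.
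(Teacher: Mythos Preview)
Your proof is correct and follows essentially the same route as the paper: Fubini to swap the $s$- and $n$-integrals, the Plancherel formula (Theorem~\ref{thm_plancherel}) applied to $\partial_s^j(f*\mu_s)$, the diagonalization $\Pi_{r,\Lambda}(\mu_s)\zeta_l=<\mu_s,\phi^{r,\Lambda,l}>\zeta_l$ from Proposition~\ref{prop_pi(f)_rad} together with the $O(v)$-invariance of $\mu_s$, and finally Plancherel once more on $f$ after pulling out the supremum of $\hat S^j$. The only cosmetic difference is that you expand the Hilbert--Schmidt norm via matrix entries $c_{l,l'}$, whereas the paper computes $\sum_l\nn{k.\Pi_{r,\Lambda}(f).\zeta_l}^2$ directly; these are the same quantity.
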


Next (Subsection~\ref{subsec_hatS}), 
we obtain the following boundedness for $\hat{S}^j$:
\begin{prop}
\label{prop_hatS}
Let $h$ be an integer such that 
$1\leq h <(z-2)/2$ (only for $v\geq 4$). 
We have:
$$
\exists C>0\quad \forall \phi\in\Pc \qquad
\hat{S}^h(\phi) \,\leq\, C
\; .
$$  
\end{prop}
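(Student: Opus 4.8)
The plan is to bound $\hat S^h(\phi)$ for $\phi=\phi^{r,\Lambda,l}$ by first making the dependence on $s$ explicit. Because $\mu$ is $O(v)$-invariant, Proposition~\ref{prop_pi(f)_rad} lets me replace the spherical function $\phi^{r,\Lambda,l}$ by the elementary kernel $\Theta^{r,\Lambda,l}$ in the pairing; and since $O(v)$ commutes with the dilations, applying $n\mapsto s.n$ merely rescales the parameters, so that
\[
<\mu_s,\phi^{r,\Lambda,l}>\,=\,<\mu,\Theta^{sr,s^2\Lambda,l}>.
\]
Writing $g(s)$ for this common value, we have $g(0)=1$ and the goal is the uniform bound $\int_0^\infty\nn{g^{(h)}(s)}^2 s^{2h-1}\,ds\le C$ over $(r,\Lambda,l)\in\Pc$. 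Substituting the explicit expression~(\ref{expression_mu}) for $\mu$ together with the formula for $\Theta$, the integration over the central sphere $\Sb^z$ turns the factor $e^{i\sqrt{1-t^4}<D_2(s^2\Lambda),Z>}$ into the reduced Bessel function $\fb{(z-2)/2}{\sqrt{1-t^4}\,s^2\nn{D_2(\Lambda)}}$, whose intrinsic $(\,\cdot\,)^{-(z-2)/2}$ factor combines with the weight $(1-t^4)^{(z-2)/2}$ of~(\ref{expression_mu}) to leave an integrable power of $1-t^4$. What remains is a one-dimensional integral over $t\in[0,1]$ of this Bessel factor against the $\Sb^v$-average of $e^{itsr<X_v^*,X>}\prod_j\flz{l_j}{\frac{s^2\lambda_j}2 t^2\nn{\pr jX}^2}$.

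Next I would differentiate under the integral sign, letting each $\partial_s$ fall on the Bessel factor and on the phases and using the Bessel recurrence relations, so that $g^{(h)}$ is again an integral of the same shape with a Bessel function of comparable order. The contribution of small $s$ is harmless: the integrand is smooth there and the weight $s^{2h-1}$ with $h\ge1$ is integrable near $0$. The whole point is the tail $s\to\infty$, where the required decay comes from the oscillation of the central factor $\fb{(z-2)/2}{\,\cdot\,}$ in its argument $s^2\nn{D_2(\Lambda)}$. Rescaling $s$ by the central scale $\nn{D_2(\Lambda)}^{-1/2}$ converts this tail into a model integral independent of $\nn{\Lambda}$; tracking the exponents in the weighted square integral then shows convergence exactly for $h<(z-2)/2$. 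Since $z=v(v-1)/2$, this is the condition $1\le h<v(v-1)/4-1$, which forces $v\ge4$.

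The Laguerre factors are handled by the classical uniform estimate $\nn{\flz{l_j}{x}}\le1$ ($x\ge0$), which removes the dependence on $l$ in the undifferentiated factors, and the $X_v$-phase (present only for odd $v$, where $r>0$) is bounded in modulus by $1$. The hard part is to combine these with the central decay \emph{uniformly} in $(r,\Lambda,l)$: each $s$-derivative landing on the $\Vc$-factors produces growth in the frequency $r$ and in the effective Laguerre degree, and this growth must be absorbed by the central Bessel decay without degrading the model-integral bound above. This is exactly where the vanishing of the curvature of the homogeneous sphere $S_1$ at the equator enters—the degeneracy concentrates near $t=1$ and slows the decay of $g$ to the order that makes the weighted square integral converge precisely for $h<(z-2)/2$. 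I expect the core of the argument to be a van der Corput / stationary-phase estimate (or an exact Bessel integral identity) for the $t$-integral, valid uniformly in all three parameters, from which Proposition~\ref{prop_hatS} follows after the rescaling.
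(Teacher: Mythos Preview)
Your setup matches the paper's: pass to $\Theta^{r,\Lambda,l}$, integrate out $\Sb^z$ to get the reduced Bessel factor $\fb{(z-2)/2}{s^2\sqrt{1-t^4}\,\nn{A}}$, distribute $\partial_s^h$ over the three factors, and reduce to weighted $L^2$-in-$s$ integrals whose convergence you diagnose by the substitution $s'=s^2\sqrt{1-t^4}\,\nn{A}$. That much is right, and it is exactly how the paper arrives at the threshold $h<(z-2)/2$ via~(\ref{eq_maj_int_fb2}).

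The gap is in the ``hard part'' you flag but do not resolve. When a derivative hits the $\Vc$-factors, two things happen: a derivative on the Laguerre product brings down a factor $\lambda_{j_0}t^2$ and replaces $\flsz{l_{j_0}}$ by $\flsz{l_{j_0}}'$; and (for $v$ odd) a derivative on $e^{irstx_v}$ brings down $irtx_v$. Your proposal deals with neither. For the first, the bound $\nn{\flz{l}{x}}\le 1$ is not enough; you need that the \emph{derivatives} of $\flsz{l}$ are also bounded on $\Rb^+$ uniformly in $l$ (the paper records this as Lemma~\ref{lem_fcn_lag}), and then the stray $\lambda_{j_0}$ is controlled by $\nn{A}\sim\sum_j\lambda_j$ and absorbed into the Bessel change of variable. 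For the second, the growth in $r$ is \emph{not} absorbed by the central Bessel decay, and a van~der~Corput or stationary-phase estimate in $t$ does not remove it: the $t$-integrand has no oscillatory phase in $r$ once $x_v$ is frozen. The paper's mechanism is different and specific: integrate by parts $\tilde h$ times in the variable $x_v$ on the sphere $\Sb^v$ (using a stereographic atlas with poles $\pm X_1$), which kills the factor $(irt)^{\tilde h}$ at the cost of further $x_v$-derivatives landing on the Laguerre product and producing additional $\lambda_j s^2t^2$ factors---again dominated by $\nn{A}s^2t^2$ and fed into the same change of variable. Without this integration-by-parts step (Lemmas~\ref{lem_maj_check_b_h} and~\ref{lem_maj_checkb_h0} in the paper) your bound would blow up in $r$ when $v$ is odd, and the proof is incomplete.
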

For the last proposition,
we will use technical lemmas,
which will be proved in Subsection~\ref{subsec_technical_lemmas}.

Propositions~\ref{prop_S_hat_S}
and~\ref{prop_hatS}
imply Theorem~\ref{thm_L2_S}.

\subsection{\mathversion{bold}{$S^j$} and \mathversion{bold}{$\hat{S}^j$}}
\label{subsec_S_hat_S}

Here, we prove Proposition~\ref{prop_S_hat_S}.
Let $f\in\Sc(N_v)$.
With the Plancherel formula of Section~\ref{sec_not_tool},
we will prove in this subsection that we have:
\begin{equation}
  \label{equality_nd_Sjf}
\nd{S^j(f)}^2
\,=\,
\int
\sum_{l\in\Nb^{v'}} 
\nn{\hat{S}^j(\phi^{r,\Lambda,l})}^2
\nn{k.\Pi_{r,\Lambda}(f).\zeta_l}^2
dm(r,k,\Lambda)\quad.  
\end{equation}
Because of the Plancherel formula (see Theorem~\ref{thm_plancherel}),
this equality implies:
\begin{eqnarray*} 
\nd{S^j(f)}^2
\,\leq\,
\sup_{\phi\in\Pc}\nn{\hat{S}^j(\phi)}^2
\int
\sum_{l\in\Nb^{v'}} 
\nn{k.\Pi_{r,\Lambda}(f).\zeta_l}^2
dm(r,k,\Lambda)\\
\,=\,
\sup_{\phi\in\Pc}\nn{\hat{S}^j(\phi)}^2\;
\nd{f}^2
\quad. 
\end{eqnarray*}
Thus Proposition~\ref{prop_S_hat_S} is proved,
if equality~(\ref{equality_nd_Sjf}) is satisfied.

Let establish equality~(\ref{equality_nd_Sjf}).
Fubini's equality and the Plancherel formula (Theorem~\ref{thm_plancherel})
for $\partial_s^j(f*\mu_s)\in \Sc(N)$
yield to:
\begin{equation}
  \label{eq:Sj2}
\nd{S^j(f)}^2
=
\int_{s=0}^\infty 
\int
\sum_{l\in\Nb^{v'}} 
\nn{k.\Pi_{r,\Lambda}(\partial_s^j(f*\mu_s)).\zeta_l}^2
dm(r,\Lambda,k)
s^{2j-1}ds
\quad.  
\end{equation}
Because $\mu_s$ is a radial probability measure,
$k.\Pi_{r,\Lambda}(\mu_s)$ 
equals $\Pi_{r,\Lambda}(\mu_s)$,
and we have (Proposition~\ref{prop_pi(f)_rad}):
$\Pi_{r,\Lambda}(\mu_s).\zeta_l=
<\mu_s,\phi^{r,\Lambda,l}>
\zeta_l$.
Then we deduce:
\begin{eqnarray*}
k.\Pi_{r,\Lambda}(f)
k.\Pi_{r,\Lambda}(\mu_s).\zeta_l
&=&
<\mu_s,\phi^{r,\Lambda,l}>
k.\Pi_{r,\Lambda}(f).\zeta_l
\\
k.\Pi_{r,\Lambda}(\partial_s^j(f*\mu_s)).\zeta_l
&=&
\partial_s^j.<\mu_s,\phi^{r,\Lambda,l}>
k.\Pi_{r,\Lambda}(f).\zeta_l
\quad.
\end{eqnarray*}
This last equality, 
with~(\ref{eq:Sj2})
and Fubini's equality,
yield to:
\begin{eqnarray*}
\nd{S^j(f)}^2
=
\int
\sum_{l\in\Nb^{v'}} 
\int_{s=0}^\infty \nn{\partial_s^j.<\mu_s,\phi^{r,\Lambda,l}>}^2s^{2j-1}ds
\nn{k.\Pi_{r,\Lambda}(f).\zeta_l}^2\\
 dm(r,\Lambda,k)\quad.
\end{eqnarray*}
Because of definition of $\hat{S}^j$,
we deduce equality~(\ref{equality_nd_Sjf}).
Thus Proposition~\ref{prop_S_hat_S}
is proved.

\subsection{Boundedness of \mathversion{bold}{$\hat{S}^j$}}
\label{subsec_hatS}

Here, we prove Proposition~\ref{prop_hatS}.
The tools are some computations and properties of special functions
(which are recalled in Subsection~\ref{subsec_specialfcn}).
We remark:
\begin{lem}  
\label{lem_derivee_fcn_s2}
Let $f$ be a smooth function on $\Rb$.
For $h=1,2,\ldots$,
the  derivative  $g^{(h)}(s)$ of  $g(s)=f(s^2)$
is a linear combination of
  $s^{d(j,h)}f^{(h'+j)}(s^2)$, where:
$$
\begin{cases}
      \mbox{if}\; h=2h',\;
  d(j,h)=2j,\; \mbox{on}\; {0\leq j\leq h'}, 
\\
     \mbox{if}\; h=2h'-1,\;
d(j,h)=2j+1,\; \mbox{on}\; {0\leq j\leq h'-1}.\\
\end{cases}
$$
  We remark $d(j,h)+h=2h'+2j$.
\end{lem}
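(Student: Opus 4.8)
The plan is to argue by induction on $h$, differentiating a single generic building block and tracking how the exponent of $s$ and the order of the derivative of $f$ evolve. The base case $h=1$ is immediate: here $h=2\cdot 1-1$ so $h'=1$, and the chain rule gives $g'(s)=2s\,f'(s^2)=2\,s^{d(0,1)}f^{(h')}(s^2)$, which is exactly the claimed form with the single index $j=0$ in the range $0\le j\le h'-1=0$. For the inductive step the only computation needed is the derivative of one term $s^{d}f^{(m)}(s^2)$, namely $d\,s^{d-1}f^{(m)}(s^2)+2\,s^{d+1}f^{(m+1)}(s^2)$. Thus each term splits into two: one with the exponent lowered by $1$ and the derivative order unchanged, and one with the exponent raised by $1$ and the derivative order raised by $1$. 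Passing from $h$ to $h+1$ flips the parity of $h$, and since $d(j,h)$ always has the same parity as $h$ (the remark gives that $d(j,h)+h=2h'+2j$ is even), the two new exponents $d-1$ and $d+1$ automatically carry the parity required at level $h+1$.

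It then remains to check the index ranges in the two parity cases. When $h=2h'$ is even, so $h+1=2(h'+1)-1$ is odd, I would start from the terms $s^{2j}f^{(h'+j)}(s^2)$ with $0\le j\le h'$; differentiation produces orders $h'+j$ and $h'+j+1$, seemingly covering $h'$ through $2h'+1$. The subtlety is that the term of lowest order $h'$ arises only from the coefficient $d=2j$ evaluated at $j=0$, which vanishes; hence the surviving orders are precisely $h'+1,\dots,2h'+1$. Writing $h''=h'+1$ so that $h+1=2h''-1$, these match $f^{(h''+j)}(s^2)$ for $0\le j\le h''-1=h'$, and substituting indices confirms that the accompanying exponents are the odd numbers $2j+1=d(j,h+1)$, with the two families overlapping consistently at each order. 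When $h=2h'-1$ is odd, so $h+1=2h'$ is even, I would start from $s^{2j+1}f^{(h'+j)}(s^2)$ with $0\le j\le h'-1$; now the coefficients $2j+1$ and $2$ never vanish, the surviving orders fill $h',\dots,2h'$, and the exponents come out as the even numbers $2j=d(j,h+1)$, again with matching powers where the two families meet.

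The main obstacle is purely the bookkeeping of the index ranges, and in particular noticing that in the even-to-odd case the would-be lowest-order term is annihilated by the vanishing coefficient $2j$ at $j=0$; this is exactly what keeps the lower end of the range in step with the statement. Everything else is a routine verification that the two families produced by the product rule reassemble into the claimed linear combination, so the induction closes.
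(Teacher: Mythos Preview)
Your induction is correct: the base case is right, the product-rule computation for $s^{d}f^{(m)}(s^2)$ is right, and the bookkeeping in both parity cases checks out, including the key observation that in the even-to-odd step the term of lowest derivative order disappears because its coefficient is $2j$ at $j=0$. The paper does not actually supply a proof of this lemma; it is introduced with ``We remark'' and treated as an elementary computation, so your argument is exactly the kind of straightforward induction the paper implicitly leaves to the reader.
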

We use also lemmas, 
(whose main lines  of proof are given in the following subsection),
and the following notations:
we fix $\phi=\phi^{r,\Lambda,l}\in\Pc$ and $h=1,2,\ldots$;
we set for $n,\tilde{h}\in\Nb$:
  $$
  \check{b}^{\tilde{h},n}(t,s)
  \,=\,
  \int_{S_1^{(v)}} 
  {(irtx_v)}^{\tilde{h}}
  e^{itsrx_v}
  \partial_{s'}^n
  {\left[
    \overset{v'}{\underset{j=1}  \Pi}
    \flz {l_j} {\lambda_js'r^2\frac{\nn{\pr j X}^2} 2}\right]}_{s'=s^2}
  d\sigma_v(X)
  \quad.
  $$
We denote for $g=(h_1,h_2)\in\Nb^2$, $j=(j_1,j_2)\in\Nb^2$:
$$
I^{g,j}
\,:=\,
\int_0^\infty \nn{b^{g,j}(s)}^2s^{2h-1}ds
\quad,
$$
  \begin{eqnarray*}
    b^{g,j}(s)
    \,:=\,
    s^{d_1+d_2}
    \int_{r=0}^1 
    \check{b}^{\tilde{h},h'_1+j_1}(t,s)\quad 
    \fbs {\frac{z-2}2} ^{(h'_2+j_2)} (s^2\sqrt{1-t^4} \nn{A})\\
    {(\sqrt{1-t^4} \nn{A})}^{h'_2+j_2}
    \;  t^{v-1}{(1-t^4)}^{\frac{z-2}2}dt  
    \quad ,
  \end{eqnarray*}
where $\tilde{h}=h-h_1-h_2$ and for $i=1,2$: 
$d_i:=d(j_i,h_i)$.
If $v$ is odd, we denote $\bar{h}=h_1+h_2$.

\begin{lem}
  \label{lem_maj_partial_h_mu_s_phi}
$\partial_s^h<\mu_s,\phi>$ is a linear combination of $b^{g,j}(s)$
over $g,j$ such that $h_1+h_2+\tilde{h}=h$,
where  $\tilde{h}=0$ if $v$ is even,
and $0\leq j_i\leq h_i/2$, $i=1,2$.
\end{lem}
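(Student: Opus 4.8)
The plan is to compute $<\mu_s,\phi>$ in closed form and then differentiate $h$ times under the integral sign. First I would exploit that $\mu$, and hence every dilate $\mu_s$, is $O(v)$-invariant (the Kor\'anyi norm and the measure $\mu$ are $O(v)$-invariant, and dilations commute with the $O(v)$-action on $N_v$). Since $\phi=\phi^{r,\Lambda,l}$ is by definition the $O(v)$-average of $\Theta^{r,\Lambda,l}$, Fubini together with the invariance of $\mu_s$ gives $<\mu_s,\phi>=<\mu_s,\Theta^{r,\Lambda,l}>$; this removes the $dk$-integration and lets me work with the explicit function $\Theta$.

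Next I would insert the polar expression (\ref{expression_mu}) for $\mu$ together with the dilation $s.\exp(tX+\sqrt{1-t^4}Z)=\exp(stX+s^2\sqrt{1-t^4}Z)$. Evaluating $\Theta^{r,\Lambda,l}$ at this point factors the integrand into the three pieces carrying the $s$-dependence: the exponential $e^{istrx_v}$ with $x_v=\langle X_v^*,X\rangle$; the Laguerre product $\prod_{j=1}^{v'}\flz{l_j}{\tfrac{\lambda_j}2 s^2t^2\nn{\pr{j}{X}}^2}$; and the central character $e^{is^2\sqrt{1-t^4}\langle D_2(\Lambda),Z\rangle}$. Integrating the last factor over $Z\in\Sb^z$ produces the reduced Bessel function $\fb{\frac{z-2}2}{s^2\sqrt{1-t^4}\nn{A}}$ (with $\nn{A}=\nn{D_2(\Lambda)}$), by the standard spherical-average identity $\int_{\Sb^z}e^{i\langle\xi,Z\rangle}d\sigma_z(Z)=\fb{\frac{z-2}2}{\nn{\xi}}$. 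This presents $<\mu_s,\phi>$ as $2\int_0^1$ of the $X$-integral of (exponential $\times$ Laguerre product) times the Bessel factor, against the weight $t^{v-1}(1-t^4)^{(z-2)/2}$.

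Then I would differentiate $h$ times in $s$ and interchange $\partial_s^h$ with the $t$- and $X$-integrations; this is legitimate since $v\ge 4$ forces $(z-2)/2\ge 2>0$, so the weight is integrable, while the Bessel and Laguerre factors and all their $s$-derivatives are continuous on the compact ranges of $t,X,Z$ and locally bounded in $s$, so dominated convergence applies. Leibniz's rule across the three $s$-dependent factors produces a sum over $\tilde h+h_1+h_2=h$, where $\tilde h,h_1,h_2$ count the derivatives falling on the exponential, the Laguerre product and the Bessel factor. The exponential contributes $(itrx_v)^{\tilde h}e^{istrx_v}$. Because the Laguerre product and the Bessel factor are functions of $s^2$, I would apply Lemma~\ref{lem_derivee_fcn_s2} to each: with $h_1$ derivatives the Laguerre product becomes a linear combination in $j_1$ of $s^{d(j_1,h_1)}[\partial_{s'}^{h'_1+j_1}(\cdots)]_{s'=s^2}$, and with $h_2$ derivatives the Bessel factor becomes, via the chain rule, a linear combination in $j_2$ of $s^{d(j_2,h_2)}(\sqrt{1-t^4}\nn{A})^{h'_2+j_2}\fbs{\frac{z-2}2}^{(h'_2+j_2)}(s^2\sqrt{1-t^4}\nn{A})$, the index ranges $0\le j_i\le h_i/2$ being exactly those furnished by the lemma. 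Factoring out $s^{d_1+d_2}=s^{d(j_1,h_1)+d(j_2,h_2)}$ and recognizing the $X$-integral of the exponential times the $s'$-derivative of the Laguerre product as $\check b^{\tilde h,h'_1+j_1}(t,s)$, each resulting term is precisely one of the $b^{g,j}(s)$, which proves the claim.

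Finally I would record the parity dichotomy. When $v=2v'$ the representation parameter satisfies $r=0$, so the exponential factor equals $1$ and is constant in $s$; hence $\partial_s^{\tilde h}$ annihilates it unless $\tilde h=0$, leaving only the $\tilde h=0$ terms, which is the asserted constraint for even $v$. For odd $v$ all splittings $\tilde h+h_1+h_2=h$ contribute. I expect the only genuine work to be the bookkeeping of this triple Leibniz expansion combined with the two invocations of Lemma~\ref{lem_derivee_fcn_s2} — tracking the powers of $s$, the chain-rule and binomial constants (all absorbed into ``linear combination''), and the index ranges — rather than any analytic difficulty, since the interchange of differentiation and integration and the Bessel identity are routine once $v\ge 4$.
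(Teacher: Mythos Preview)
Your proposal is correct and follows essentially the same route as the paper: reduce $\langle\mu_s,\phi\rangle$ to $\langle\mu_s,\Theta^{r,\Lambda,l}\rangle$ by radiality, insert the polar formula~(\ref{expression_mu}), integrate the central character over $\Sb^z$ via the Bessel identity to obtain the closed form, then apply Leibniz across the three $s$-dependent factors and Lemma~\ref{lem_derivee_fcn_s2} to the two factors that are functions of $s^2$. Your explicit justification of the even-$v$ constraint $\tilde h=0$ via $r=0$ (so $e^{istrx_v}\equiv 1$) is a detail the paper leaves implicit, but otherwise the arguments coincide.
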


Proposition~\ref{prop_hatS} will be implied by:

\begin{prop}
\label{prop_maj_I_h}
The integrals
$I^{g,j}$
are bounded independently of $(\Lambda,r,l)\in\Pc$, 
for all parameters 
$g,j$ given in Lemma~\ref{lem_maj_partial_h_mu_s_phi},
as long as $1\leq h<(z-2)/2$ (if $v\geq4$).
\end{prop}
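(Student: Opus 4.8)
The plan is to bound each $b^{g,j}(s)$ pointwise in $s$, uniformly in $(\Lambda,r,l)\in\Pc$, and then to check that the resulting majorant is square integrable against $s^{2h-1}ds$ exactly under the hypothesis $1\leq h<(z-2)/2$. Since $b^{g,j}$ factors into a ``horizontal'' integral over $\Sb^v$ (the term $\check b^{\tilde h,h_1'+j_1}$, carrying the Laguerre functions $\flsz{l_j}$ and, for odd $v$, the oscillatory phase) and a ``vertical'' reduced Bessel factor $\fbs{\frac{z-2}2}$ in the center, I would estimate the two factors separately with the technical lemmas of Subsection~\ref{subsec_technical_lemmas}.

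For the vertical factor I would use the differentiation rule $\fbs{\alpha}'(u)=-\tfrac{u}{2(\alpha+1)}\fb{\alpha+1}{u}$: iterating it writes $\fbs{\frac{z-2}2}^{(k)}(u)$ as a linear combination of terms $u^{m}\fb{\beta}{u}$ with $\beta-m\geq\frac{z-2}2$, and the classical bound $\nn{\fb{\beta}{u}}\leq C\min(1,u^{-\beta-1/2})$ of \cite{szego} then gives $\nn{\fbs{\frac{z-2}2}^{(k)}(u)}\leq C\min(1,u^{-(z-1)/2})$, uniformly. Evaluated at $u=s^2\sqrt{1-t^4}\nn{A}$ and multiplied by $(\sqrt{1-t^4}\nn{A})^{h_2'+j_2}$, this factor is bounded and decays like $s^{-(z-1)}$ for large argument. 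For the horizontal factor I would expand the $s'$-derivative of the Laguerre product by Leibniz (using Lemma~\ref{lem_derivee_fcn_s2} to pass from $\partial_s$ to $\partial_{s'}$ with $s'=s^2$), extract the explicit powers of $s$ produced by the chain rule, and invoke the uniform bound $\nn{\flsz{l}}\leq 1$ together with the Laguerre derivative estimates of \cite{szego}; the ratios $\lambda_j/\nn{A}$ being bounded, one may then normalize $\nn{A}=1$ by the substitution $s\mapsto s/\sqrt{\nn{A}}$, while for odd $v$ the prefactor $(irtx_v)^{\tilde h}e^{irstx_v}$ is controlled by the stationary phase decay of the integral over $\Sb^v$, uniformly in $r$.

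It then remains to integrate the majorant. Near $s=0$ every factor is bounded and $s^{2h-1}$ is integrable because $h\geq1$, so this region is harmless. For $s\to\infty$ I would split the $t$-integral at $\sqrt{1-t^4}\sim s^{-2}$: away from the equator the vertical decay $s^{-(z-1)}$ produces convergence, whereas near the equator $t\to1$, where the curvature of $S_1$ and hence the vertical decay degenerate, the Bessel factor is only bounded and convergence must come from the smallness of this region, weighted by the factor $(1-t^4)^{(z-2)/2}$ of~(\ref{expression_mu}). The binding case is the extremal term in which all $h$ derivatives fall on the horizontal factor, where the Laguerre derivatives create growth in $s$ that has to be absorbed by the vertical decay and this equatorial weight; carrying out the $s$- and $t$-integrations then requires precisely $1\leq h<(z-2)/2$. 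I expect the main obstacle to be not this bookkeeping but the uniformity of the horizontal estimate in the Laguerre index $l$ and the eigenvalues $\Lambda$: this is exactly what the special-function lemmas of Subsection~\ref{subsec_technical_lemmas} must supply, and, granting them, the splitting above yields $I^{g,j}\leq C$ independently of $(\Lambda,r,l)\in\Pc$, as claimed.
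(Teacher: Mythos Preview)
Your strategy is sound and uses the same raw ingredients as the paper --- the technical estimates for $\check b^{\tilde h,n}$ of Subsection~\ref{subsec_technical_lemmas} (which you explicitly grant), uniform Laguerre bounds, and the decay of the reduced Bessel function and its derivatives --- but the way you assemble them is different from what the paper does.

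The paper does \emph{not} use the pointwise bound $\nn{\fbs{(z-2)/2}^{(k)}(u)}\leq C\min(1,u^{-(z-1)/2})$ followed by a splitting of the $t$-integral at $\sqrt{1-t^4}\sim s^{-2}$. Instead, after inserting the bound of Lemma~\ref{lem_maj_check_b_h} (or Lemma~\ref{lem_maj_checkb_h0} in the separate case $\bar h=0$, $v$ odd), it applies Cauchy--Schwarz to the $t$-integral, and then performs the single change of variable $s'=s^2\sqrt{1-t^4}\,\nn{A}$. This substitution does two things at once: it makes all powers of $\nn{A}$ cancel \emph{exactly} (so no separate rescaling $s\mapsto s/\sqrt{\nn{A}}$ is needed), and it factors the $(s,t)$-double integral into a product of a one-variable Bessel integral, finite by~(\ref{eq_maj_int_fb2}), and an elementary $dt$-integral. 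The threshold $h<(z-2)/2$ then drops out of the condition $-1<\beta<2\alpha$ in~(\ref{eq_maj_int_fb2}) with $\alpha=(z-2)/2$. Your splitting argument reaches the same conclusion with comparable (actually very slightly better) numerology, but requires tracking the ``near/away from equator'' contributions and the exponent bookkeeping by hand. The paper's route is shorter; yours is a valid real-variable substitute.

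Two small points worth tightening in your write-up. First, ``stationary phase decay uniformly in $r$'' is not quite the mechanism: the factor $(irtx_v)^{\tilde h}$ is exactly what the $\tilde h$ integrations by parts in $x_v$ consume, leaving $s^{-\tilde h}$ with no residual $r$-dependence --- this is the content of Lemmas~\ref{lem_maj_check_b_h} and~\ref{lem_maj_checkb_h0}, which you grant. Second, the paper treats the extremal case $\bar h=h_1+h_2=0$ (odd $v$, all derivatives on the exponential) separately, because the lower endpoint $\beta>-1$ in~(\ref{eq_maj_int_fb2}) would fail; your sketch doesn't single this out, but since you defer to the technical lemmas it is implicitly handled.
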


The rest of this subsection is devoted to its proof,
first if $v=2v'+1$ and $\overline{h}\not=0$, 
or if $v=2v'$,
then if $v=2v'+1$ and $\overline{h}=0$.

We will use the following property of the Bessel functions,
deduced from the analyticity of the Bessel function $\fbs \alpha$,
its boundedness on $\Rb$,
and the classical majorations for $J_\alpha$:
\begin{eqnarray}
\forall -1<\beta<2\alpha\quad
\forall n\in\Nb\quad:\quad 
\int_{s=0}^\infty 
\nn{\fbs \alpha ^{(n)} (s)}^2  
s^\beta ds<\infty \quad.
\label{eq_maj_int_fb2}
\end{eqnarray}

\subsubsection{Boundedness of 
  \mathversion{bold}{$I^{(h_1,h_2),(j_1,j_2)}$}
  if  \mathversion{bold}{$\overline{h}\not=0$} 
or \mathversion{bold}{$v=2v'$}}

Here, we assume  ($\overline{h}\not=0,v=2v'+1$) or ($v=2v'$)
and we show Proposition~\ref{prop_maj_I_h}
for these cases as long as $h<(z-2)/2$.
We will use majorations of
$\check{b}^{\tilde{h},n}(t,s)$,
given in the following lemma which will be proved in next subsection.
\begin{lem}
  \label{lem_maj_check_b_h}  
  The expressions $\check{b}^{\tilde{h},n}(t,s)$,
  for $0\leq n\leq \tilde{h}$,
  are bounded up to a constant of 
  $v,\tilde{h}$ by:
  $
  {(\nn{A}t^2)}^n
  s^{-\tilde{h}}
  \sum_{0\leq i\leq \tilde{h}}
  {(\nn{A}s^2t^2)}^i
  $;
if $v=2v'$, we assume $\tilde{h}=i=0$.
\end{lem}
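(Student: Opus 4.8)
The plan is to separate the two independent mechanisms encoded in $\check{b}^{\tilde h,n}(t,s)$: the $n$ derivatives $\partial_{s'}^n$ acting on the Laguerre product will produce the factor ${(\nn{A}t^2)}^n$, while the oscillatory integration over $\Sb^v$ of the weight $x_v^{\tilde h}e^{itsrx_v}$ will produce $s^{-\tilde h}\sum_{0\le i\le\tilde h}{(\nn{A}s^2t^2)}^i$. First I would factor the constant ${(irt)}^{\tilde h}$ out of the integral and write
$$
\check{b}^{\tilde h,n}(t,s)={(irt)}^{\tilde h}\int_{\Sb^v}x_v^{\tilde h}\,e^{itsrx_v}\,g_n(s,X)\,d\sigma_v(X),
$$
with $g_n(s,X)=\partial_{s'}^n\bigl[\prod_{j=1}^{v'}\flz{l_j}{\tfrac{\lambda_j}{2}s't^2\nn{\pr{j}{X}}^2}\bigr]_{s'=s^2}$. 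Expanding $g_n$ by Leibniz and the chain rule, each application of $\partial_{s'}$ brings down a coefficient $\tfrac{\lambda_j}{2}t^2\nn{\pr{j}{X}}^2$, which is bounded by $\tfrac12\nn{A}t^2$ since $\nn{\pr{j}{X}}\le\nn{X}=1$ on $\Sb^v$ and $\lambda_j\le\nn{A}$. The $n$ such coefficients assemble the prefactor ${(\nn{A}t^2)}^n$, the $n_j$-th derivatives of the Laguerre functions $\flsz{l_j}$ being retained for the integration performed below.

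To extract the decaying factor $s^{-\tilde h}$ I would treat the oscillatory integral, specialising first to odd $v=2v'+1$; in the even case $r=0$ forces $\tilde h=0$, so the weight is trivial and only the previous step is needed. Since the last coordinate $x_v$ enters no $\nn{\pr{j}{X}}^2$, I slice $\Sb^v$ along $x_v$: writing $X=(\sqrt{1-x_v^2}\,\omega,x_v)$ with $\omega\in\Sb^{v-1}$, the amplitude depends on $x_v$ only through the scale $1-x_v^2$ in the Laguerre arguments, and
$$
\check{b}^{\tilde h,n}(t,s)={(irt)}^{\tilde h}c_v\int_{-1}^1 x_v^{\tilde h}\,e^{itsrx_v}\,{(1-x_v^2)}^{\frac{v-3}2}\,G(x_v)\,dx_v,
$$
where $G(x_v)=\int_{\Sb^{v-1}}g_n\,d\sigma_{v-1}(\omega)$. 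Integrating by parts $\tilde h$ times in $x_v$ against $e^{itsrx_v}$ gains $\tilde h$ powers of ${(tsr)}^{-1}$; since ${(rt)}^{\tilde h}{(tsr)}^{-\tilde h}=s^{-\tilde h}$, the prefactor collapses exactly to the asserted $s^{-\tilde h}$.

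Each of these (at most $\tilde h$) integrations by parts sends a derivative either onto the explicit weight $x_v^{\tilde h}{(1-x_v^2)}^{(v-3)/2}$ or onto $G$. In the second case the derivative differentiates a Laguerre factor whose argument carries the scale $\lambda_j s^2t^2\le\nn{A}s^2t^2$, hence contributes one factor bounded by $\nn{A}s^2t^2$; grouping the terms according to the number $i$ of derivatives that reach $G$ produces exactly $\sum_{0\le i\le\tilde h}{(\nn{A}s^2t^2)}^i$. I would then bound the remaining integrals by their $L^1$-norm in $x_v$, using $\nn{\flz{l}{x}}\le1$ for $x\ge0$ \cite{szego} together with the corresponding estimates for the derivatives; the boundary contributions at $x_v=\pm1$ are absorbed into the same expression through the factor ${(1-x_v^2)}^{(v-3)/2}$.

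The hard part will be the uniformity of all these bounds in the multi-index $l$. Pointwise the derivatives of the Laguerre functions $\flsz{l}$ grow with the degree, so the naive estimate fails; the needed uniform control must instead come from integrating out the angular variables of each plane $\pr{j}{X}$ and reducing the $\omega$-integral to an integral over the simplex of the sub-radii $u_j=\nn{\pr{j}{\omega}}$, where the sup-norm bound $\nn{\flz{l}{x}}\le1$ together with $l$-uniform $L^1$-estimates for the Laguerre functions suffice. Carrying out this reduction simultaneously with the $\tilde h$ integrations by parts, so that the powers of $\nn{A}$, $s$ and $t$ come out precisely as in the statement, is the main bookkeeping difficulty.
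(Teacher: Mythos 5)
Your two mechanisms are exactly the paper's: the chain rule applied to $\partial_{s'}^n$ brings down factors $\tfrac{\lambda_j}{2}t^2\nn{\pr{j}{X}}^2\leq C\nn{A}t^2$ (the paper uses $\nn{A}\sim\sum_j\lambda_j$), and $\tilde h$ integrations by parts against $e^{istrx_v}$ convert the prefactor ${(irt)}^{\tilde h}$ into $s^{-\tilde h}$, each derivative that lands on a Laguerre factor contributing one power of $\nn{A}s^2t^2$. But your implementation of the integration by parts contains a genuine gap. You slice $\Sb^v$ along $x_v$, which produces the degenerate density ${(1-x_v^2)}^{(v-3)/2}$, and then integrate by parts $\tilde h$ times on $[-1,1]$. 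Every step whose derivative hits the weight costs a full power of $1-x_v^2$, so after $\tilde h$ steps you face ${(1-x_v^2)}^{(v-3)/2-\tilde h}$, non-integrable, together with non-vanishing endpoint terms, as soon as $\tilde h\gtrsim (v-1)/2$. The lemma is needed for $\tilde h$ up to $h<(z-2)/2$ with $z=v(v-1)/2$: already for $v=5$ one has $h=\tilde h=3>(v-1)/2=2$, and the worst term carries ${(1-x_v^2)}^{-2}$. Worse, a boundary term surviving after $k<\tilde h$ steps has gained only ${(str)}^{-k}$ against the prefactor ${(rt)}^{\tilde h}$, leaving an uncompensated ${(rt)}^{\tilde h-k}$, while the bound must be uniform in $r>0$; so these terms cannot be ``absorbed'' as you claim. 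The paper avoids all of this by a different parameterization: it covers $\Sb^v$ by two caps with poles $\pm X_1$ (a direction transverse to the oscillation), takes stereographic charts in which $x_v$ is an honest coordinate, inserts a smooth partition of unity $\psi_1,\psi_2$, and integrates by parts in the coordinate $x_v$ with a smooth, compactly supported amplitude $x_v\psi_i D_i$ times the Laguerre product --- no degenerate weight and no boundary terms, for any number of steps.

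The second divergence is the uniformity in $l$, which you explicitly leave open. The paper closes it by invoking its Lemma~\ref{lem_fcn_lag} (citing \cite{ThangL}): $\flsz{k}$ \emph{and its derivatives} are bounded on $\Rb^+$ independently of $k$, applied pointwise to every differentiated Laguerre factor, both in the even case and after each integration by parts in the odd case. Your skepticism about this pointwise bound is not frivolous --- indeed $(\flsz{k})'(0)=-(k+\tfrac12)$, so a sup-norm bound on the first derivative uniform in $k$ cannot hold as literally stated --- but your proposed substitute (reducing the $\omega$-integral to the simplex of sub-radii $u_j=\nn{\pr{j}{\omega}}$ and using $l$-uniform $L^1$-estimates) is only announced, never carried out, and you concede it is ``the main bookkeeping difficulty.'' So the attempt shares the paper's skeleton but is incomplete on two counts: the sliced integration by parts fails in the required range $\tilde h\leq h<(z-2)/2$ for $v\geq5$, where the paper's two-chart atlas is precisely the missing idea; and the $l$-uniform control of differentiated Laguerre factors, which the paper settles (via its cited lemma) in one line, remains an open gap in your argument.
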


Using this lemma and Cauchy-Schwarz inequality, 
we obtain that $\nn{b^{g,j}(s)}^2$
is then bounded up to a constant by:
  \begin{eqnarray*}
    \nn{A}^{2(h'_2+j_2+h'_1+j_1+i)} 
    s^{2(d_1+d_2)-2\tilde{h}+4i}
    \int_{0}^1
    t^{2(v-1+2i+2(h'_1+j_1))}{(1-t^4)}^{-\frac 12}dt\\
    \int_{0}^1
    \nn{\fbs{\frac{z-2}2}^{(h'_2+j_2)}(s^2\sqrt{1-t^4} \nn{A})
      {(1-t^4)}^{\frac{z-2}2+\frac{h'_2+j_2}2+\frac 14}}^2dt
    \quad.
  \end{eqnarray*}
Finally,  $I^{g,j}$ is bounded up to a constant of $v,h$ by the maximum over
 $0\leq i\leq \tilde{h}$ of:
  \begin{eqnarray*}
    J^{(i)}
    &:=&
    \nn{A}^{2(h'_2+j_2+h'_1+j_1+i)} 
    \int_{0}^\infty
    s^{2(d_1+d_2)-2\tilde{h}+4i}\\
    &&\quad  \int_{0}^1
    \nn{\fbs{\frac{z-2}2}^{(h'_2+j_2)}(s^2\sqrt{1-t^4} \nn{A})
      {(1-t^4)}^{\frac{z-2}2+\frac{h'_2+j_2}2+\frac 14}}^2dt
    \quad s^{2h-1}ds
    \quad.  
  \end{eqnarray*}

We compute another expression of the exponent of $s$ 
(see Lemma~\ref{lem_derivee_fcn_s2})
and the change of variable
  $s'=s^2 \sqrt{1-t^4} \nn{A}$:
  \begin{eqnarray*}
  J^{(i)}
  \,=\,
  \int_{s'=0}^\infty 
  \nn{\fbs{\frac{z-2}2}^{(h'_2+j_2)}(s')}^2
  {s'}^{h'_2+j_2+h'_1+j_1+2i-1}\frac{ds'}2\\
  \int_{0}^1 {(1-t^4)}^{z-2+\frac 12-(h'_1+j_1+2i)}dt  
  \quad.    
  \end{eqnarray*}
Because of (\ref{eq_maj_int_fb2}),
we deduce 
that $J^{(i)}, i=0,\ldots,\tilde{h}$,
$0\leq j_1\leq h'_1$,
$0\leq j_2\leq h'_2$,
are finite as long as $h'_1+h'_2\not=0$ and $h<(z-2)/2$.
Proposition~\ref{prop_maj_I_h}
is thus proved if 
$(\overline{h}\not=0,v=2v'+1)$ or $(v=2v')$.

\subsubsection{Boundedness of
  \mathversion{bold}{$I^{0,0}$}
if  \mathversion{bold}{$v=2v'+1$} and 
\mathversion{bold}{$h=\tilde{h}$}, }

In this sub-subsection,
we will assume $v=2v'+1$ and $h=\tilde{h}<(z-1)/2$, 
so $r\not=0$ and $g=j=(0,0)=0$;
we show Proposition~\ref{prop_maj_I_h} for this case.

We will use the following majoration of
$\nn{\check{b}^{h,0}(r,s)}$:
\begin{lem}
  \label{lem_maj_checkb_h0}
  If $v=2v'+1$,
  $\check{b}^{h,0}(t,s)$ 
  is bounded up to a constant
  by:
  $$
  s^{-h}
  \sum_{0\leq i\leq h}
  {(\nn{A}s^2t^2)}^i
  \quad .
  $$
\end{lem}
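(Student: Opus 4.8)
The statement to prove is Lemma~\ref{lem_maj_checkb_h0}, which bounds the quantity
$$
\check{b}^{h,0}(t,s)
=
\int_{\Sb^v}
{(irtx_v)}^{h}
e^{itsrx_v}
{\left[
\overset{v'}{\underset{j=1}\Pi}
\flz {l_j} {\lambda_j s' r^2\frac{\nn{\pr j X}^2} 2}\right]}_{s'=s^2}
d\sigma_v(X)
$$
in the odd case $v=2v'+1$. Let me sketch how I would approach it.

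\begin{proof}[of Lemma~\ref{lem_maj_checkb_h0} (sketch)]
The plan is to treat the factor ${(irtx_v)}^h e^{itsrx_v}$ by repeated integration by parts in the $x_v$ variable on the sphere $\Sb^v$, trading each power of $rt$ for a factor $s^{-1}$ together with a tangential derivative of the remaining integrand. Concretely, since $x_v$ ranges over $[-1,1]$ on $\Sb^v$, I would first foliate $\Sb^v$ by the level sets $\{x_v=\text{const}\}$, each of which is a scaled copy of $\Sb^{v-1}$ carrying the Laguerre product (which depends only on the $\pr j X$ for $j\le v'$, hence only on the coordinates $x_1,\dots,x_{v-1}$). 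The oscillatory factor $e^{itsrx_v}$ depends on $x_v$ alone, so I can integrate in $x_v$ separately: writing $(irtx_v)^h e^{itsrx_v}=(t/s)^h \partial_{x_v}^h e^{itsrx_v}$ (up to the Jacobian from the measure on $\Sb^v$), each integration by parts in $x_v$ produces one factor $(ts)^{-1}\cdot(rt)=r/s$ and moves a $\partial_{x_v}$ onto the slice measure and the Laguerre product. Here I would use the fact that $r\not=0$, which is exactly the odd-case hypothesis, to make this division legitimate.

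The main technical point is to control the derivatives $\partial_{x_v}^k$ of the slice, since differentiating the surface measure of $\Sb^v$ in the $x_v$-direction produces negative powers of $\sqrt{1-x_v^2}$ that must be shown not to cause divergence, while differentiating the Laguerre product brings down factors of $\lambda_j r^2\nn{\pr j X}^2$. The key estimate I would invoke is the uniform boundedness of the Laguerre functions $\flsz n$ and their first derivatives on $\Rb^+$, together with the observation that the argument $\lambda_j s^2 r^2 \nn{\pr j X}^2/2$ is controlled by $\nn{A}s^2 t^2$ after using the normalization built into $D_2(\Lambda)$ (so that $\sum_j\lambda_j\nn{\pr j X}^2\sim\nn{A}$ on the support). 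Each derivative that lands on the Laguerre product contributes one factor $\nn{A}s^2 t^2$; collecting all $h$ integration-by-parts steps, I expect exactly the prefactor $s^{-h}$ and a polynomial $\sum_{0\le i\le h}{(\nn{A}s^2t^2)}^i$ in $\nn{A}s^2t^2$ of degree $h$, matching the claimed bound.

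The hard part will be keeping the boundary terms in the integration by parts under control and verifying that the singular Jacobian factors from differentiating $d\sigma_v$ along $x_v$ integrate to a finite constant depending only on $v$ and $h$; this is a delicate but routine estimate once the foliation is set up, relying on the smoothness and decay of the Laguerre functions and the fact that the oscillation $e^{itsrx_v}$ has no stationary phase in the interior (its phase is linear in $x_v$), so no extra curvature analysis is needed here — the gain comes purely from the algebraic factor $(irtx_v)^h$. I would then assemble the pieces and read off the stated majoration.
\end{proof}
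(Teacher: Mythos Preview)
Your overall strategy---integrate by parts $h$ times in $x_v$ to trade the factor $(irtx_v)^h$ for $s^{-h}$ times derivatives of the Laguerre product---is exactly the mechanism the paper uses. However, the implementation you sketch has a genuine gap. First a minor point: the identity you write, $(irtx_v)^h e^{itsrx_v}=(t/s)^h\partial_{x_v}^h e^{itsrx_v}$, is false (the right side equals $(irt^2)^h e^{itsrx_v}$, losing the $x_v^h$); what one actually does is antidifferentiate $e^{itsrx_v}$ repeatedly, gaining $(itsr)^{-1}$ each time and moving $\partial_{x_v}$ onto the remaining factor $x_v^h G(x_v)(1-x_v^2)^{(v-3)/2}$.

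The serious issue is the boundary terms, which you dismiss as routine. In your foliation by $\{x_v=\mathrm{const}\}$, after $k$ integrations by parts the $j$-th boundary contribution is of size $(itsr)^{-(j+1)}(irt)^h\cdot[\partial_{x_v}^{j}(\ldots)]_{\pm1}$, hence carries a factor $(rt)^{h-j-1}s^{-(j+1)}$. The bound must be uniform in $r$ (over all $(r,\Lambda,l)\in\Pc$), and for $j\le h-2$ this blows up as $r\to\infty$. These terms do not all vanish: $(1-x_v^2)^{(v-3)/2}=(1-x_v^2)^{v'-1}$ kills only the first $v'-1$ boundary terms, and already for $v=5$, $h=3$ (which is within the allowed range $h<(z-2)/2=4$) the term $j=1$ survives with an unbounded factor $rt$. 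No parity cancellation rescues this. The paper avoids the problem not by foliating along $x_v$ but by covering $\Sb^v$ with two caps having poles at $\pm X_1$ and a partition of unity $\psi_1+\psi_2=1$; in each stereographic chart $x_v$ is an \emph{interior} coordinate and the cutoff $\psi_i$ is compactly supported, so all boundary terms vanish identically and the integration by parts goes through cleanly. That choice of pole (orthogonal to $X_v$) is the key technical device you are missing.
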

Because of the previous majoration of
   $\check{b}^{h,0}(r,s)$ 
  of Lemma~\ref{lem_maj_checkb_h0},
  $\nn{b^{0,0}(s)}^2$ is bounded up to a constant
by the maximum over $i=0,\ldots,h$ of:
\begin{eqnarray*}
&&  {\left(\int_0^1 
   s^{-h}{(\nn{A}s^2t^2)}^i
  \nn{\fb {\frac{z-2}2}{s^2\sqrt{1-t^4} \nn{A}}}
  t^{v-1}{(1-t^4)}^{\frac{z-2}2}dt \right)}^2 \\
&&\quad \leq
    \int_0^1 
\nn{   s^{-h} {(\nn{A}s^2t^2)}^i
  \nn{\fb {\frac{z-2}2}{s^2\sqrt{1-t^4} \nn{A}}}
{(1-t^4)}^{\frac{z-2}2}}^2 dt
    \int_0^1 
  \nn{t^{v-1}}^2dt
\quad,
\end{eqnarray*}
 by H\"older's inequality.
Thus by Fubini's equality,
$I^{0,0}$ is bounded up to a constant by:
\begin{eqnarray*}
&&  \int_0^1 
 \int_0^\infty
  {(\nn{A}s^2t^2)}^{2i}
  \nn{\fb {\frac{z-2}2}{s^2\sqrt{1-t^4} \nn{A}}}^2
\frac{ds}s
{(1-t^4)}^{z-2} dt\\
&&\qquad=\,
 \int_0^\infty
  {s'}^{2i}
  \nn{\fb {\frac{z-2}2}{s'}}^2
\frac{ds'}{2s'}
  \int_0^1 
t^{2i} {(1-t^4)}^{z-2-i} dt
\quad,
\end{eqnarray*}
with Fubini's equality after the change of variable
$s'=s^2\sqrt{1-t^4} \nn{A}$.
Thus by Fubini's equality,
$I^{0,0}$ is finite as long as
$z-2-h>-1$ (because of the $dt$-integral)
and $2h-1<z-2$ (because of the $ds$-integral 
and majoration~(\ref{eq_maj_int_fb2})).

As the other cases where proved in the previous sub-subsection,
Proposition~\ref{prop_maj_I_h}
is also proved as long as
Lemmas~\ref{lem_maj_partial_h_mu_s_phi},
\ref{lem_maj_check_b_h},
\ref{lem_maj_checkb_h0},
 are shown.

\subsection{Proof of the technical lemmas}
\label{subsec_technical_lemmas}

In this subsection, we give the main lines of the proofs 
for  lemmas used in the previous subsection.

\subsubsection{Proof of Lemma~\ref{lem_maj_partial_h_mu_s_phi}}

To prove Lemma~\ref{lem_maj_partial_h_mu_s_phi}, we need the following property of the Bessel functions:
\begin{equation}
\int_{\Sb^n} e^{i<x,y>}d\sigma_n(y)
\,=\,
\fb {\frac{n-2}2} {\nn{x}}
\quad,  \label{fcn_bessel_intsph}
\end{equation}
where $<,>$ denotes the Euclidean scalar product of $\Rb^n$, 
and $\nn{.}$ its norms.

Let $\phi=\phi^{r,\Lambda,l}\in\Pc$.
First, we will need the expression of $<\mu_s,\phi>$.
As the  probability measure $\mu_s$ is radial,
and with the expression of $\phi$ 
given in Subsection~\ref{subsec_specialfcn},
we have:
  \begin{eqnarray*}
    <\mu_s,\phi>
    &=&
    <\mu_s,\Theta^{r,\Lambda,l}>
    \,=\,
    \int_{S_1} \Theta^{r,\Lambda,l} (s.n) d\mu(n)\\
    &=&
    2\int_0^1 \int_{\Sb^v}\int_{\Sb^z}
    \Theta^{r,\Lambda,l}
    (stX,s^2\sqrt{(1-t^4)}A)
    d\sigma_z(A)d\sigma_v(X)\\
    &&\qquad\qquad t^{v-1}{(1-t^4)}^\frac{z-2}2dt
    \quad,
  \end{eqnarray*}
because of the expression~(\ref{expression_mu}) of $\mu$.
We notice:
  \begin{eqnarray*}
\int_{\Sb^z}
    \Theta^{r,\Lambda,l}
    (t_1X,t_2A)
    d\sigma_z(A)
&=&
    e^{i t_1 rx_v}
    \overset{v'}{\underset{j=1}  \Pi}
    \flz {l_j} {\frac{\lambda_j}2 \nn{\pr j {t_1X} }^2}\\
&&\qquad\qquad
\int_{\Sb^z}
    e^{it_2<A,A>}
    d\sigma_z(A)
    \; ;
  \end{eqnarray*}
The last integral against $\sigma_z$ equals
  $\fb {\frac{z-2}2} {t_2 \nn{A}}$,
  because of (\ref{fcn_bessel_intsph}).
We then obtain:
  \begin{eqnarray*}
    <\mu_s,\phi>
    \,=\,
    2\int_0^1
    \int_{\Sb^v}
    e^{itsr x_v}
    \overset{v'}{\underset{j=1}  \Pi}
    \flz {l_j} {\lambda_js^2\frac{\nn{\pr j X }^2} 2}
    d\sigma_v(X)\\
    \fb {\frac{z-2}2} {s^2\sqrt{1-t^4} \nn{A}} \;
    t^{v-1}{(1-t^4)}^{\frac{z-2}2}dt
    \quad .  
  \end{eqnarray*}

We can now show Lemma~\ref{lem_maj_partial_h_mu_s_phi}.
The expression
  $$
  \partial_s^h\left[
    e^{irstx_v}
    \overset{v'}{\underset{j=1}  \Pi}
    \flz {l_j} {\lambda_js^2t^2\frac{\nn{\pr j X}^2} 2}
    \fb {\frac{z-2}2}{s^2\sqrt{1-t^4} \nn{A}}\right]
  \quad,
  $$
is a linear combination over $\tilde{h}+\overline{h}=h$ 
  and $h_1+h_2=\overline{h}$ of:
  $$
  \partial_s^{\tilde{h}}\left[e^{itsrx_v>}\right]
  \partial_s^{h_1}\left[
    \overset{v'}{\underset{j=1}  \Pi}
    \flz {l_j} {\lambda_js^2t^2\frac{\nn{\pr j X}^2} 2}\right]
  \partial_s^{h_2}\left[\fb {\frac{z-2}2}{s^2\sqrt{1-t^4} \nn{A}}\right]
  \quad .
  $$
Computing each derivative 
(using  Lemma~\ref{lem_derivee_fcn_s2} for the second and the third one),
 Lemma~\ref{lem_maj_partial_h_mu_s_phi} is proved.

\subsubsection{Proof of Lemma~\ref{lem_maj_check_b_h} if \mathversion{bold}{$v=2v'$}}

To prove Lemmas~\ref{lem_maj_check_b_h} and~\ref{lem_maj_checkb_h0},
we will use the following remark, 
deduced from the properties of Laguerre functions 
\cite[Lemma 1.5.3]{ThangL}:

\begin{lem}
\label{lem_fcn_lag}
The functions $\flsz k$ and its derivatives are bounded on $\Rb^+$, 
independently of $k$ (but not of the derivation order).
\end{lem}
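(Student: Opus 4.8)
The plan is to separate the estimate on $\flsz k$ itself, which I regard as elementary, from the estimates on its derivatives, where essentially all the content lies. For the function, recall that $\flz k x = L_k^0(x)e^{-x/2}$, and use the classical inequality $\nn{L_k^{(\alpha)}(x)}\leq \binom{k+\alpha}{k}e^{x/2}$, valid for $\alpha\geq 0$ and $x\geq 0$ (see \cite[\S 5.1]{szego}). Specializing to $\alpha=0$ gives $\nn{L_k^0(x)}\leq e^{x/2}$, and hence $\nn{\flz k x}\leq 1$ for every $k\in\Nb$ and every $x\geq 0$. This already settles the case of zero derivatives, with the explicit uniform constant $1$.

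For the derivatives I would argue by induction on the order $m$, the aim being to write $\partial_x^m\flsz k$ as a linear combination, with a number of terms depending only on $m$, of Laguerre functions $L_{k'}^{(\beta)}(x)e^{-x/2}$ with $0\leq\beta\leq m$ and $k'\leq k$. The two ingredients are the differentiation rule $\frac{d}{dx}L_k^{(\alpha)}=-L_{k-1}^{(\alpha+1)}$ and the contiguous relations among Laguerre polynomials of neighbouring type and degree (\cite[\S 5.1]{szego}); for instance one has $\partial_x\flz k x=-\bigl(L_{k-1}^{(1)}(x)+\tfrac12 L_k^0(x)\bigr)e^{-x/2}$, and one iterates. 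Once this reduction is carried out, it remains to bound each Laguerre function appearing in it on $\Rb^+$, uniformly in $k$; this is precisely the information gathered in \cite[Lemma 1.5.3]{ThangL}.

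The main obstacle, and the reason the cited reference is genuinely needed, is this last uniformity in $k$: a crude estimate does not suffice, since the elementary bound $\nn{L_{k-1}^{(1)}(x)}\leq\binom{k}{k-1}e^{x/2}=k\,e^{x/2}$ already degrades with $k$, so one cannot simply sum the contiguous relations term by term. What I would invoke instead is the sharper, region-by-region control of Laguerre functions on $\Rb^+$ from \cite[Lemma 1.5.3]{ThangL}, which is sensitive to the different size regimes of the variable. Feeding those bounds into the finite combination produced by the induction is what would yield a constant $C=C(m)$ independent of $k$, as the lemma asserts; the bookkeeping of that combination, together with verifying that the sharp estimates apply in each regime, is the delicate part of the argument.
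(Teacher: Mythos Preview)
The paper does not actually prove this lemma: it is introduced only as a ``remark, deduced from the properties of Laguerre functions \cite[Lemma 1.5.3]{ThangL}'', with no argument given. So your proposal is already more detailed than what appears in the text, and your treatment of the case $m=0$ via $\nn{L_k^0(x)}\leq e^{x/2}$ is correct.

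For $m\geq 1$, however, your reduction runs into a real obstruction---one you anticipate in your last paragraph but cannot circumvent. From $\partial_x\flz k x = -\bigl(L_{k-1}^{(1)}(x)+\tfrac12 L_k^{0}(x)\bigr)e^{-x/2}$ one reads off $\partial_x\flz k 0 = -(k+\tfrac12)$, so the first derivative of $\flsz k$ is \emph{not} bounded on $\Rb^+$ independently of $k$, and the lemma as literally stated fails. Correspondingly, the individual pieces $L_{k'}^{(\beta)}(x)e^{-x/2}$ with $\beta\geq 1$ in your inductive decomposition take the value $\binom{k'+\beta}{\beta}$ at $x=0$, with no cancellation among them there; the region-by-region estimates of \cite[Lemma~1.5.3]{ThangL} do not furnish a $k$-uniform bound for these unweighted quantities near the origin (indeed no estimate can, given the values just computed). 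The ``delicate part'' you flag is therefore not a matter of bookkeeping but a genuine failure of the stated claim; any usable version would have to be reformulated, for instance by retaining the chain-rule factors that accompany the Laguerre derivatives in the applications.
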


We assume here $v=2v'$. 
From Lemma~\ref{lem_fcn_lag},
we deduce that
$$
  \partial_{s'}^{n}.
\left[ 
\overset{v'}{\underset{j=1}  \Pi}
 \flz {l_j}  {\lambda_js'r^2\frac{\nn{\pr j X}^2} 2}\right]
  \quad,    
$$
is majorized up to a constant of $n,v$ by the sum of
$$
\nn{A}^n \underset {j\in J}\Pi 
{(t^2\frac{\nn{\pr j X}^2} 2)}^{e_j}
\,=\,
{(t^2 \nn{A})}^n
\underset{j\in J} \Pi {(\frac{\nn{\pr j X}^2} 2)}^{e_j}
\quad,
$$
where the sum runs over the subsets $J$ of $\Nb \cap[1,v']$,
and $e_j\in\Nb, j\in J$,
such that $\sum_{j\in J} e_j=n$.
We thus obtain that 
  $\check{b}^{0,n}(t,s)$
is bounded up to a constant of $n,v$
by ${(t^2 \nn{A})}^n$.
Lemma~\ref{lem_maj_check_b_h} is then proved for $v=2v'$.

\subsubsection{Proof of Lemma~\ref{lem_maj_check_b_h} if
\mathversion{bold}{$\overline{h}\not=0$} and of Lemma~\ref{lem_maj_checkb_h0}}

We assume here $v=2v'+1$ and $\overline{h}\not=0$.
In this case, proof of Lemma~\ref{lem_maj_check_b_h}
is based on suitable $\tilde{h}$ integrations by parts.
We only prove the case $(\tilde{h},n)=(1,0)$.
The general case is similar.

Let choose as atlas of the Euclidean sphere $\Sb^v$ of $\Rb^v$
two caps with poles
  $X_1$ and $-X_1$:
  $$
  C_1
  \,:=\,
  \{ X\in \Sb^v \, ,\; <X,X_1>\;>\; -\frac12\}
  \quad,
  $$
  and
  $$
  C_2
  \,:=\,
  \{ X\in \Sb^v \, , \; <X,-X_1>\;>\; \frac12\}
  \quad.
  $$
  Then we fix a partition of unity, 
  i.e. two smooth functions
  $\psi_1,\psi_2$ on $\Sb^v$ such that:
  $$
  \supp \psi_i \subset C_i
  \quad\mbox{and}\quad
  0\leq \psi_i\leq 1 \;,\; i=1,2\;,
  \quad\mbox{and}\quad
  \psi_1+\psi_2 \,=\,1
  \quad\mbox{on}\;\Sb^v
  \;.
  $$ 
  As chart over $C_1$ (respectively $C_2$),
  we consider the stereo-graphic projection 
  with pole $X_1$ (respectively $-X_1$),
  and we denote $C'_1$ (respectively $C'_2$) its range over $\Rb^{v-1}$.
  $C'_1$ et $C'_2$ are compact subsets of  $\Rb^{v-1}$.
  The points $X=\sum_i x_i X_i$ of the sphere $\Sb^v$
  are parameterized by the coordinates $x_i, i\not=1$;
  furthermore, the measure  $\sigma_v$ 
  is mapped over $C_i, i=1,2$ 
  a measure with a smooth density  $D_i,i=1,2$
  over the Lebesgue measure $dx$.

We decompose the integral $\check{b}^{1,0}(t,s)$
 on this atlas and we integrate by parts.
We obtain:
\begin{eqnarray*}
\nn{\check{b}^{1,0}(t,s)}
&=&  
\left|  \sum_{i=1,2}
    \int_{C'_i} 
    \frac1s e^{isrtx_v}
    \partial_{x_v} \left[\overset{v'}{\underset{j=1} \Pi} 
    \flz {l_j} {\lambda_js^2t^2\frac{\nn{\pr j X}^2} 2}
 x_v \psi_i(X) D_i\right] dx \right|\\
&\leq&
    \frac1s
    \sum_{i=1,2}
    \int_{C'_i} 
    \left| \partial_{x_v} 
      \left[\overset{v'}{\underset{j=1} \Pi} 
    \flz {l_j} {\lambda_js^2t^2\frac{\nn{\pr j X}^2} 2}
        x_v \psi_i(X) D_i\right]\right| dx 
    \quad .
\end{eqnarray*}
($X$ is parameterized by $x$ on each $C_i$.)
From Lemma~\ref{lem_fcn_lag},
and the properties of the chosen atlas,
and with $\nn{A}\sim \sum_{j_0} \lambda_{j_0}$, 
we obtain the majoration of Lemma~\ref{lem_maj_check_b_h}
for $(\tilde{h},n)=(1,0)$.
The rest of the proofs of Lemmas~\ref{lem_maj_check_b_h}
and~\ref{lem_maj_checkb_h0} are similar.



\begin{flushleft}
V\'eronique Fischer\\
Departement of Mathematics, G\"oteborg University,\\
S-412 96  G\"oteborg, Sweden.\\
e-mail: veronfi@math.chalmers.se \hspace{3em} veronique.fischer@math.u-psud.fr\\
\end{flushleft}

\end{document}